\newcommand{\ds}{\displaystyle}
\long\def\alert#1{\parindent2em\smallskip\hbox to\hsize
{\hskip\parindent\vrule
\vbox{\advance\hsize-2\parindent\hrule\smallskip\parindent.4\parindent
\narrower\noindent#1\smallskip\hrule}\vrule\hfill}\smallskip\parindent0pt}
 \newtheorem{thm}{Theorem}[section]
\newtheorem{cor}[thm]{Corollary}
 \newtheorem{lem}[thm]{Lemma}
 \newtheorem{prop}[thm]{Proposition}
\theoremstyle{definition}
\theoremstyle{remark}
 \theoremstyle{problem}
 \numberwithin{equation}{section}
\newtheorem*{Theorem A}{\textbf{  Theorem A}}
\newtheorem*{Main Theorem}{\textbf{Main Theorem}}
\newtheorem*{p a}{\textbf{Proof of  Theorem A}}
\newtheorem*{p b}{\textbf{Proof of the Main Theorem}}
\begin{document}

\title[Order of the Schur multiplier of $p$-groups]
 {Classification of finite $p$-groups by the size of their  Schur multipliers}
\author[P. Niroomand]{Peyman Niroomand}
\address{School of Mathematics and Computer Science\\
Damghan University, Damghan, Iran}
\email{niroomand@du.ac.ir, p$\_$niroomand@yahoo.com}
\author[F. Johari]{Farangis Johari}
\address{ Departamento de Matem\'{a}tica, Instituto de Ci\^{e}ncias Exatas, Universidade Federal de Minas Gerais, Av. Ant\^{o}nio Carlos 6627, Belo Horizonte, MG, Brazil.}
\email{farangisjohari@ufmg.br,farangisjohari85@gmail.com}

\thanks{\textit{Mathematics Subject Classification 2010.} Primary  20D15; Secondary 20E34. 20F18}

\keywords{Schur multiplier, capable groups, finite $p$-groups}

\date{\today}


\begin{abstract}
Let $d(G)$ be the minimum number of elements required to generated a group $G.$ For a group $G $ of order 
$p^n$ with derived subgroup of order $  p^k $ and $d(G) = d,$ we knew  the order of the Schur multiplier of $G$ is bounded by $ p^{\frac{1}{2}(d-1)(n-k+2)+1}. $ In the current paper, we find the structure of all $p$-groups that attains
 the mentioned bound. Moreover,  we show that all of them  are capable.

\end{abstract}

\maketitle

\section{Motivation and Preliminaries}
The Schur multiplier, $\mathcal{M}(G),  $  of a group $G$ first appeared in $1904$ in the  work of Schur    on  projective representations of groups.

The Schur multiplier was studied by several authors  and proved to be an important tool in the classification of $p$-groups. For a  group $G$ of order $ p^n,$ by a result of Green in \cite{18a}, we have  $ |\mathcal{M}(G)|\leq p^{\frac{1}{2}n(n-1)-t(G)} $ with $t(G)\geq 0.  $ Several authors  characterized the structure of $p$-groups by using $t(G). $ The reader can find the structure
of $p$-groups when $t(G)\in \{0,\ldots,6\}  $ (see \cite{2,9,hau,ni12,salemkar,zh}).
Later, the first author \cite{25}  improved  Green's bound  and showed for any non-abelian group  $G $ of order $ p^n $  with $ |G'|=p^k, $ we have \begin{align}\label{kh1}|\mathcal{M}(G)|\leq p^{\frac{1}{2}(n-k-1)(n+k-2)+1}.\end{align}  He  also characterized  all of $p$-groups that attain the  upper bound when $ k=1. $ Recently, Rai \cite{rai1} improved  this bound. He showed  for a  $p$-group $G $ of order $ p^n $ with $ |G'|=p^k $ and $ d(G)=d,$ we have
\begin{align}\label{kh}
|\mathcal{M}(G)|\leq p^{\frac{1}{2}(d-1)(n+k-2)+1}.
\end{align}
In the present paper, we are going to find the structure of all
$p$-groups that attain  the bound \eqref{kh}, and then we show that all of them are capable.
 \\The concept of the non-abelian tensor square $G\otimes G $ of a group $G$ is a special case of the non-abelian tensor product of two arbitrary groups that was introduced by Brown and Loday  \cite{Bro}. It is easy to check that $\kappa : G\otimes G \rightarrow G'$ given by $g \otimes g'\rightarrow [g, g' ]$ for all $ g,g'\in G$ is an epimorphism. Let $J_2(G)  $ be the kernel of $ \kappa, $ and let $ \bigtriangledown(G) $ be a subgroup of $G\otimes G  $  generated by the set $\{g\otimes g\mid g\in G\}.  $ Clearly, $\bigtriangledown(G)   $ is a central subgroup of  $G\otimes G.$
The non-abelian exterior square $ G\wedge G$ is the quotient group $ \dfrac{G\otimes G}{\bigtriangledown(G)}. $ The element $ (g\otimes g') \bigtriangledown(G)$ in $ G\wedge G $ is denoted by $ g\wedge g' $ for all $ g,g'\in G. $ The map $ \kappa $ induces the epimorphism  $\kappa' : G\wedge G \rightarrow G'$ given by $g \wedge g'\rightarrow [g, g' ]$ for all $ g,g'\in G.$ The kernel of the map $\kappa'  $ is isomorphic to the Schur multiplier  of $G$ (for more information,
see \cite{Bro}).\\
Recall that a group $ G $ is called capable  provided that $ G\cong H/Z(H) $ for a group $ H. $ Beyl et al.  \cite{3} gave a
criterion for detecting capable groups. They showed  that a group $G$ is capable if and only if the epicenter of $G,$ $ Z^*(G),$ is trivial.
Ellis \cite{111} showed $Z^{\wedge}(G)=Z^*(G),$ where $Z^{\wedge}(G)$ is the exterior center of $G,$ i.e.  the set of all elements $g$ of $G$ for which
$g \wedge h = 1_{G\wedge G}$ for all $h \in  G$ 
(see for instance \cite{111}  to find  more information in  these topics).

The following technical result  characterizes the structure of all minimal non-abelian $p$-groups.
\begin{lem}\label{9}
\cite[Exercise 8a.]{ber} and \cite{red}
Let $G$ be a minimal non-abelian $p$-group. Then $|G'|= p$ and $G$ is isomorphic to one of the following groups:
\begin{itemize}
\item[$(a).$]
$ G\cong \langle a, b\mid a^{p^m}=1, b^{p^n}=1,[ a, b]= a^{p^{m-1}},[a,b,a]=[a,b,b]=1\rangle
$ for all $ m, n $ such that $ m\geq 2, n\geq 1.$ Moreover, $ |G|= p^{m+n}$ and $ Z(G)=\langle a^p\rangle \times \langle b^p\rangle $.
\item[$(b).$] $G\cong \langle a, b\mid a^{p^m}= b^{p^n}=[ a, b]^p=1,[a,b,a]=[a,b,b]=1 \rangle$ is
 of order $ p^{m+n+1} $ and if $p = 2,$ then $m + n > 2.$ Moreover, $ Z(G)=\langle a^p\rangle \times \langle b^p\rangle \times \langle [ a, b]\rangle.$
\item[$(c).$] $G\cong Q_8.$
\end{itemize}
\end{lem}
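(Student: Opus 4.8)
The plan is to use the defining property of a minimal non-abelian $p$-group $G$: it is non-abelian but every proper subgroup is abelian. First I would show $G$ is $2$-generated. If $[a,b]\neq 1$ for some $a,b\in G$, then $\langle a,b\rangle$ is non-abelian, hence cannot be proper, so $\langle a,b\rangle=G$ and $d(G)=2$. Writing $G=\langle a,b\rangle$ with $a,b$ independent modulo $\Phi(G)$, the subgroups $\langle a,\Phi(G)\rangle$ and $\langle b,\Phi(G)\rangle$ are maximal, hence proper, hence abelian; therefore $\Phi(G)$ centralizes both $a$ and $b$, and since these generate $G$ we get $\Phi(G)\leq Z(G)$. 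As $G'\leq\Phi(G)$, this already forces $G$ to have nilpotency class $2$.

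Next I would pin down $|G'|$. In a class-$2$ group the commutator map is bilinear, so $G'=\langle [a,b]\rangle$ is cyclic; set $c=[a,b]$. Using $[a,b]^{p}=[a^{p},b]$ (valid because $c$ is central) together with the fact that $a^{p}\in\Phi(G)$ and $b$ both lie in the abelian maximal subgroup $\langle b,\Phi(G)\rangle$, I obtain $c^{p}=[a^{p},b]=1$, whence $|G'|=p$. The same computation shows every $p$-th power in $G$ lies in $\Phi(G)\leq Z(G)$, so $G/Z(G)$ has exponent $p$; being non-cyclic and $2$-generated it is isomorphic to $C_{p}\times C_{p}$, and therefore $Z(G)=\Phi(G)$ has index $p^{2}$. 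This structural skeleton ($d(G)=2$, class $2$, $|G'|=p$, $Z(G)=\Phi(G)$) is what the classification rests on.

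The classification itself I would obtain by reading off normal forms from the abelianization $G/G'\cong C_{p^{m}}\times C_{p^{n}}$ (with $m\geq n\geq 1$) together with the placement of $c$. Lifting a basis of $G/G'$ to generators $a,b$ of $G$, each of $a^{p^{m}}$ and $b^{p^{n}}$ lies in $G'=\langle c\rangle$. The dividing line is whether the generator $c$ of $G'$ is a $p$-th power in $G$. If $c\notin\langle g^{p}\mid g\in G\rangle$, I would adjust the lift so that $a^{p^{m}}=b^{p^{n}}=1$; then $\langle a\rangle\cap G'=\langle b\rangle\cap G'=1$, the central factor $\langle c\rangle$ splits off, and $G$ is the group of type $(b)$ of order $p^{m+n+1}$ with $Z(G)=\langle a^{p}\rangle\times\langle b^{p}\rangle\times\langle c\rangle$. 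If instead $c$ is a $p$-th power, I would absorb it into a power of one generator, arriving at the metacyclic presentation $(a)$ with $[a,b]=a^{p^{m-1}}$ and $Z(G)=\langle a^{p}\rangle\times\langle b^{p}\rangle$.

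The hard part will be the generator-adjustment step and the prime $2$. For odd $p$ the identity $(xy)^{p}\equiv x^{p}y^{p}\pmod{G'}$ makes it routine to modify a chosen lift of the $G/G'$-basis so as to cancel the unwanted commutator contributions and land on one of the two normal forms. For $p=2$ the correction term in $(xy)^{2}=x^{2}y^{2}[y,x]$ is nontrivial, so a generator whose square equals $c$ cannot always be traded for one of smaller order; this is precisely the obstruction that produces the exceptional group $Q_{8}$ and forces the restriction $m+n>2$ in case $(b)$. I would therefore isolate the small $2$-groups by direct inspection, verifying that $Q_{8}$ is minimal non-abelian and is the unique minimal non-abelian $p$-group not already covered by $(a)$ or $(b)$, which completes the trichotomy.
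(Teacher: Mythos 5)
You should first be aware that the paper contains no proof of this lemma at all: it is quoted as a known classification, with citations to Berkovich (Exercise 8a) and to R\'edei's original paper. So there is no internal proof to compare against, and your proposal amounts to reproving R\'edei's theorem from scratch. The first half of your argument is correct and is the standard one: minimality forces $d(G)=2$; the maximal subgroups $\langle a,\Phi(G)\rangle$ and $\langle b,\Phi(G)\rangle$ are abelian, so $\Phi(G)\le Z(G)$ and $G$ has class $2$; bilinearity of commutators gives $G'=\langle c\rangle$ with $c^p=[a^p,b]=1$, hence $|G'|=p$; and $Z(G)=\Phi(G)$ has index $p^2$. No complaints there.

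The gap is in the classification step, and it is concrete. Your dividing line slides between two inequivalent conditions: ``$c$ is a $p$-th power of a single element'' and ``$c\in\langle g^p\mid g\in G\rangle$''. For odd $p$ these agree, because in a class-$2$ group with $G'$ of exponent $p$ the map $x\mapsto x^p$ is an endomorphism; but for $p=2$ they differ, and that is exactly where the theorem is delicate. Indeed, in any class-$2$ group with $[a,b]=c$ of order $2$ one has $(ab)^2=a^2b^2c$, so $c=a^{-2}b^{-2}(ab)^2$ \emph{always} lies in the subgroup generated by squares; with the subgroup criterion your case analysis would route every minimal non-abelian $2$-group into the metacyclic branch, which is false, since non-metacyclic groups of type $(b)$ exist in every order $2^{m+n+1}$ with $m+n>2$ (there $c$ is not the square of any single element, as squaring $a^ib^jc^k$ shows). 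So the failure for $p=2$ is not a ``small $2$-groups'' phenomenon that finite inspection can repair: the correction term $(xy)^2=x^2y^2[y,x]$ has to be handled in the generator-adjustment argument at every order. Separately, the genuinely hard half of the theorem --- showing that in the metacyclic case one can choose $b$ with $b^{p^n}=1$, i.e.\ that the cyclic extension splits, with $Q_8$ as the unique exception --- is announced (``I would absorb\dots'') but not carried out; and there is a parameter slip you would need to fix: for the group in $(a)$ the abelianization is $\mathbb{Z}_{p^{m-1}}\times\mathbb{Z}_{p^n}$, not $\mathbb{Z}_{p^m}\times\mathbb{Z}_{p^n}$, so the invariants you read off $G/G'$ are not the exponents appearing in the presentation.
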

Let $ \mathbb{Z}_{n}^{(t)} $  denote the direct sum of $t$ copies of $ \mathbb{Z}_{n}, $ in which $ \mathbb{Z}_{n} $ is the cyclic group
of order $  n.$

\begin{prop}\label{min}
Let $G$ be a minimal non-abelian $p$-group as in Lemma \ref{9}$ (a), $ where $n \geq 1,$ and  $m \geq 2$ if $ p>2, $ $ m\geq 3$ if $ p=2.$  If $ G/G' $ is homocyclic, then $ n=m-1, $ $ G $ is non-capable, and $ Z^{\wedge}(G)=G'. $
\end{prop}
\begin{proof}
Since $ G/G' $ is homocyclic and $  G/G'\cong \mathbb{Z}_{p^n}\times \mathbb{Z}_{p^{m-1}}, $  $  n=m-1. $ We claim that $ Z^{\wedge}(G)=G'. $
  \cite[Corollary 7.4]{3} implies $ G/G' $ is capable and so $ Z^{\wedge}(G)\subseteq G'\cong  \mathbb{Z}_{p},$ by \cite[Corollary 2.2]{3}. It is sufficient to show  that $  G'=\langle a^{p^{m-1}}\rangle  \subseteq Z^{\wedge}(G).$ Since  $b^{^{p^{m-1}}}=1,  $ we have
\begin{align*}
1_{G\wedge G}&=b^{^{p^{m-1}}}\wedge  a=\overset{p^{m-1}-1}{\prod_{i=0}} (^{b^i}(b\wedge a))=
\overset{p^{m-1}-1}{\prod_{i=0}} (a\wedge [b^i,a]a)\\&=\overset{p^{m-1}-1}{\prod_{i=1}} (b\wedge [b, a])^i\overset{p^{m-1}-1}{\prod_{i=0}} (b\wedge a)=\big{(}
\overset{p^{m-1}-1}{\prod_{i=1}} (b\wedge [b,a])^i \big{)}(b\wedge a)^{p^{m-1}}.
\end{align*}
Clearly,
 \begin{align*}
 \overset{p^{m-1}-1}{\prod_{i=1}} (b\wedge [b,a])^i&=(b\wedge [b, a])^{\frac{1}{2}p^{m-1}(p^{m-1}-1)} \\&=b\wedge [b, a]^{\frac{1}{2}p^{m-1}(p^{m-1}-1)}
 \end{align*}
 Since $ [a,b]^p=1, $ we get  
 $ [b, a]^{\frac{1}{2}p^{m-1}(p^{m-1}-1)}=1,
$ and so
$ \ds\overset{p^{m-1}-1}{\prod_{i=1}} (b\wedge [b,a])^i=1. $ Thus
\[b^{p^{m-1}}\wedge  a=(b\wedge a)^{p^{m-1}}= 1_{G\wedge G}.\]
 We will show that $a^{p^{m-1}}\wedge b=(a\wedge b)^{p^{m-1}}.  $ Since
\begin{align*}
a^{p^{m-1}}\wedge b &=\overset{p^{m-1}-1}{\prod_{i=0}} (^{a^i}(a\wedge b))\\&=\overset{p^{m-1}-1}{\prod_{i=0}} (a\wedge [a^i,b ]b)\\&=\overset{p^{m-1}-1}{\prod_{i=0}} (a\wedge b)=(a\wedge b)^{p^{m-1}},
\end{align*}
 we have $a^{p^{m-1}}\wedge b=(a\wedge b)^{p^{m-1}}=(b\wedge a)^{-p^{m-1}}= (b^{p^{m-1}}\wedge  a)^{-1}=1_{G\wedge G}.$ Moreover, $ a\wedge [ a, b]= a\wedge  a^{p^{m-1}}=1_{G\wedge G}.$ Therefore $1 \neq a^{p^{m-1}}\in Z^{\wedge}(G) $ and so $ Z^{\wedge}(G)=G'.$ Hence  $ G $ is non-capable.
\end{proof}
Let $ d(G) $  denote the minimum number of elements required to generate a group $ G. $

\begin{lem}\label{lll}Let $ G $ be a capable non-abelian  $p$-group of order $ p^n $ such that $ |G'|=p, $  $ d(G)=d,$ and $ e(G/G')>p.$ Then $ |G/Z(G)|=p^2$ and $ G= NZ(G),$ where  $ N $ is a minimal non-abelian $p$-group. 
\end{lem}
\begin{proof}
 \cite[Theorem C]{isa} implies that $ |G/Z(G)|=p^2.$
Using \cite[Lemma 4.2]{ber}, we have  $ G=NZ(G), $ where $ N $ is minimal non-abelian.
\end{proof} 
Let  $ \gamma_i(G) $ be denoted the $  i$-th term of the lower central series of a group $  G.$
We need the following result in the proof of Theorem \ref{25}.
\begin{prop}\cite[Proposition 1]{ele} and \cite{el,hau}\label{jk}
Let $ G $ be a finite  non-abelian $p$-group of class $ c. $
\begin{itemize}
\item[$  (i)$]
The map \[\Psi_2:  (G/Z(G))^{(ab)}\otimes  (G/Z(G))^{(ab)}\otimes (G/Z(G))^{(ab)}  \rightarrow \big{(}G'/\gamma_3(G)\big{)}\otimes G/G'\] given by
$xG'Z(G)\otimes yG'Z(G) \otimes zG'Z(G)\mapsto $\[ ([x,y]\gamma_3(G)\otimes zG' )
([z,x]\gamma_3(G)\otimes yG')([y,z]\gamma_3(G)\otimes xG')
\]
is a  homomorphism. If any two elements of the set $\{ x,y,z\} $ are linearly dependent, then $\Psi_2(xG'Z(G) \otimes yG'Z(G) \otimes zG'Z(G))=1_{\big{(}G'/\gamma_3(G)\big{)}\otimes G/G'}.$ \item[$  (ii)$]
The map
\begin{align*}
\Psi_3: &(G/Z(G))^{(ab)}\otimes (G/Z(G))^{(ab)}\otimes (G/Z(G))^{(ab)}\otimes  (G/Z(G))^{(ab)}\\& \rightarrow  \gamma_3(G)/\gamma_4(G)\otimes  (G/Z(G))^{(ab)}~\text{given by}~\\&\big{(}x G'Z(G) \big{)}\otimes \big{(}yG'Z(G) \big{)} \otimes \big{(}z G'Z(G)\big{)}\otimes \big{(}w G'Z(G) \big{)}\\
&\mapsto\big{(}[[x,y],z]\gamma_4(G)\otimes wG'Z(G) \big{)}
\big{ (}[w,[x,y]]\gamma_4(G)\otimes zG'Z(G)\big{)}\\&\big{(}[[z,w],x]\gamma_4(G)\otimes yG'Z(G)\big{)}\big{(}[y,[z,w]]\gamma_4(G)\otimes xG'Z(G)\big{)}
\end{align*}
is a  homomorphism.
\end{itemize}
\end{prop}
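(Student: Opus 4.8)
The plan is to deduce both statements from the universal property of the tensor product: since the domains are tensor powers of the abelian group $(G/Z(G))^{(ab)}$ and the targets are abelian, it suffices to prove that each formula defines a well-defined map of the underlying sets that is multiplicative (a homomorphism) in every tensor slot separately. The computational engine is the elementary commutator calculus. For a group of class $c$ the commutator pairing $G\times G\to G'/\gamma_3(G)$, $(x,y)\mapsto[x,y]\gamma_3(G)$, is bilinear because $[x_1x_2,y]\equiv[x_1,y][x_2,y]$ and $[x,y_1y_2]\equiv[x,y_1][x,y_2]$ modulo $\gamma_3(G)$; it vanishes when one entry lies in $Z(G)$ (the commutator is trivial) or in $G'$ (the commutator lands in $\gamma_3(G)$), so it descends to a homomorphism $(G/Z(G))^{(ab)}\otimes(G/Z(G))^{(ab)}\to G'/\gamma_3(G)$. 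The analogue needed for part $(ii)$ is that $(x,y,z)\mapsto[[x,y],z]\gamma_4(G)$ is trilinear into $\gamma_3(G)/\gamma_4(G)$ and descends in each slot to $(G/Z(G))^{(ab)}$; this follows from bilinearity of $\gamma_2(G)\times G\to\gamma_3(G)/\gamma_4(G)$ together with the facts that $[[g,y],z]\in\gamma_4(G)$ once $g\in G'$ and that the whole expression is trivial when a variable is central.

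I would first treat $(i)$. The delicate point is well-definedness: replacing a representative by its product with an element of $G'Z(G)$ must leave the whole product $([x,y]\gamma_3(G)\otimes z)([z,x]\gamma_3(G)\otimes y)([y,z]\gamma_3(G)\otimes x)$ unchanged. A $G'$-shift or a central shift of a variable occurring inside a commutator is absorbed by the descent just described, while a shift of a variable occurring in a free tensor slot is harmless because that slot is read in the quotient $(G/Z(G))^{(ab)}$. Granting well-definedness, multilinearity in the first variable is a direct expansion: in $\Psi_2(x_1x_2\otimes y\otimes z)$ the first two factors split by bilinearity of $[\,,\,]$ modulo $\gamma_3(G)$ and the third by bilinearity of $\otimes$, and because the target is abelian the six resulting factors regroup into $\Psi_2(x_1\otimes y\otimes z)\,\Psi_2(x_2\otimes y\otimes z)$. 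Since the defining expression is invariant under the cyclic permutation $x\mapsto y\mapsto z\mapsto x$, linearity in the other two variables is formally identical, so $\Psi_2$ is a homomorphism.

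For the vanishing clause, multilinearity reduces the claim to the case in which two of the three arguments equal a common element $u$, since linear dependence of two entries means they lie in a common cyclic subgroup of $(G/Z(G))^{(ab)}$, so that both are powers of one element and the exponents can be extracted by multilinearity. Evaluating $\Psi_2(u\otimes y\otimes u)$, the middle factor $[u,u]\gamma_3(G)\otimes y$ is trivial, and the two remaining factors combine to $([u,y][y,u]\gamma_3(G)\otimes u)=1$ because $[y,u]=[u,y]^{-1}$. The coincident pairs $\{x,y\}$ and $\{y,z\}$ are dispatched identically via the cyclic symmetry, which gives the stated vanishing.

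Part $(ii)$ runs along exactly the same lines, with the trilinear form $[[\,,\,],\,]$ replacing the bilinear one. Linearity in $x$ follows by expanding $\Psi_3(x_1x_2\otimes y\otimes z\otimes w)$, using trilinearity modulo $\gamma_4(G)$ in the two factors where $x$ sits inside a commutator and bilinearity of $\otimes$ in the factor where $x$ fills a free slot, and regrouping in the abelian target; the expression is symmetric under the exchange $(x,y)\leftrightarrow(z,w)$, so this together with the analogous check in the $y$-variable yields full multilinearity. The main obstacle throughout is organizational rather than conceptual: one must verify that the \emph{entire} product, not merely its individual commutator factors, descends to the quotients $(G/Z(G))^{(ab)}$, and for $(ii)$ this requires tracking the higher commutators $[[g,y],z]$ and $[g,[z,w]]$ with $g\in G'$ carefully so that they vanish modulo $\gamma_4(G)$ and the bookkeeping of the four interacting terms closes up.
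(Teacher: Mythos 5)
The paper never proves this proposition: it is imported, with citations only, from Ellis--Wiegold \cite{ele}, Ellis \cite{el} and Hatui \cite{hau}. So the only meaningful comparison is with the standard verification in those sources, and your argument is exactly that verification: reduce to multilinearity via the universal property of the tensor product, use bilinearity of $(x,y)\mapsto [x,y]\gamma_3(G)$ and trilinearity of $(x,y,z)\mapsto [[x,y],z]\gamma_4(G)$ modulo the next term of the lower central series, kill entries from $G'$ and from $Z(G)$, and exploit the cyclic symmetry of $\Psi_2$ (respectively the $(x,y)\leftrightarrow(z,w)$ symmetry of $\Psi_3$) to cut down the number of slots to check. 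Your handling of the vanishing clause is also sound: in a finite abelian $p$-group, two elements lying in a common cyclic subgroup forces one to be a power of the other, so multilinearity reduces everything to the diagonal case $\Psi_2(u\otimes y\otimes u)$, which visibly cancels.

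There is, however, one point where you silently diverge from the statement as printed, and it must be flagged. Your well-definedness step asserts that a central shift of a variable in a free tensor slot ``is harmless because that slot is read in the quotient $(G/Z(G))^{(ab)}$''. But in part $(i)$ as printed the free slots are the cosets $zG'$, $yG'$, $xG'$ and the target is $\big(G'/\gamma_3(G)\big)\otimes G/G'$, not $\big(G'/\gamma_3(G)\big)\otimes (G/Z(G))^{(ab)}$. With that target the map is genuinely not well defined on $(G/Z(G))^{(ab)}\otimes(G/Z(G))^{(ab)}\otimes(G/Z(G))^{(ab)}$: replacing $z$ by $zc$ with $c\in Z(G)$ leaves $[z,x]$ and $[y,z]$ unchanged but multiplies the value by $[x,y]\gamma_3(G)\otimes cG'$, which is nontrivial in general --- take $G=H\times\mathbb{Z}_p$ with $H$ extraspecial of order $p^3$, $x,y$ noncommuting generators of $H$, and $c$ a generator of the direct factor $\mathbb{Z}_p$. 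So the printed part $(i)$ mixes two formulations: either the domain should be $(G/G')\otimes(G/G')\otimes(G/G')$ with target $\big(G'/\gamma_3(G)\big)\otimes G/G'$ (Ellis--Wiegold's original version), or, keeping the domain as printed, the free slots must be read modulo $G'Z(G)$, exactly as in part $(ii)$; the latter is also how the paper actually uses $\Psi_2$, since in Theorem \ref{25} the images of $\Psi_2$ and $\Psi_3$ are measured inside $\gamma_i(G)/\gamma_{i+1}(G)\otimes G/G'Z(G)$. Your proof is a correct proof of that corrected statement, but a complete write-up should state the correction explicitly rather than substitute the intended target without comment.
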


\begin{thm}\label{25}
Let $ G $ be a non-abelian  group of order $ p^n $ of class $c$ with $ |G'|=p^k $ and $ d=d(G). $ Then
 \begin{align*}| G\wedge G|| \mathrm{Im}\Psi_2|| \mathrm{Im} \Psi_3| &\leq | G\wedge G|\prod_{i=2}^c | \ker \alpha_i|=| \mathcal{M}(G)| |G'|\prod_{i=2}^c | \ker \alpha_i|\\&=| \mathcal{M}(G/G')|\prod_{i=2}^c|\gamma_i(G)/\gamma_{i+1}(G)\otimes G/G'|\leq | \mathcal{M}(G/G')| p^{kd},\end{align*}
where
\[\alpha_{i}: \gamma_i(G)\wedge G \rightarrow  \gamma_{i-1}(G)\wedge G\]\[x\wedge z \mapsto x \wedge z\] is a natural homomorphism
for all $ i$ such that $ i\geq 2. $
\end{thm}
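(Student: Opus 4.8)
The plan is to verify the displayed chain link by link. The middle equality $|G\wedge G|\prod_{i=2}^{c}|\ker\alpha_i| = |\mathcal{M}(G)||G'|\prod_{i=2}^{c}|\ker\alpha_i|$ is immediate from the standard exact sequence $1\to\mathcal{M}(G)\to G\wedge G\to G'\to 1$, whose second map is $x\wedge y\mapsto[x,y]$; this yields $|G\wedge G| = |\mathcal{M}(G)||G'|$. What remains are the first inequality, the telescoping equality producing $|\mathcal{M}(G/G')|\prod_{i=2}^c|\gamma_i(G)/\gamma_{i+1}(G)\otimes G/G'|$, and the final tensor bound, which I treat in turn.

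For the telescoping equality I would use the filtration $1 = \gamma_{c+1}(G)\wedge G,\ \gamma_c(G)\wedge G,\dots,\gamma_1(G)\wedge G = G\wedge G$ together with the maps $\alpha_i$. Applying the first isomorphism theorem to $\alpha_i$ for $2\le i\le c$ gives $|\gamma_i(G)\wedge G| = |\ker\alpha_i|\,|\mathrm{Im}\,\alpha_i|$ and $|\gamma_{i-1}(G)\wedge G| = |\mathrm{Im}\,\alpha_i|\,|\mathrm{coker}\,\alpha_i|$, so that
\[
\frac{|\gamma_{i-1}(G)\wedge G|}{|\gamma_i(G)\wedge G|} = \frac{|\mathrm{coker}\,\alpha_i|}{|\ker\alpha_i|}.
\]
Multiplying over $i=2,\dots,c$ telescopes to $|G\wedge G|\prod_{i=2}^c|\ker\alpha_i| = |\gamma_c(G)\wedge G|\prod_{i=2}^c|\mathrm{coker}\,\alpha_i|$. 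I would then identify the cokernels: the standard right-exact sequence $N\wedge G\to G\wedge G\to(G/N)\wedge(G/N)\to1$ with $N=G'$ gives $\mathrm{coker}\,\alpha_2\cong(G/G')\wedge(G/G') = \mathcal{M}(G/G')$; the map $x\wedge g\mapsto x\gamma_i(G)\otimes gG'$ identifies $\mathrm{coker}\,\alpha_i\cong\gamma_{i-1}(G)/\gamma_i(G)\otimes G/G'$ for $i\ge3$; and the same map applied to the central bottom term gives $\gamma_c(G)\wedge G\cong\gamma_c(G)\otimes G/G'$. Substituting these and reindexing turns the right-hand side into $|\mathcal{M}(G/G')|\prod_{i=2}^c|\gamma_i(G)/\gamma_{i+1}(G)\otimes G/G'|$, as required.

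The final inequality is routine. Since $G'\le\Phi(G)$ we have $d(G/G') = d(G) = d$, and a tensor product $A\otimes B$ with $A$ generated by $d$ elements has order at most $|B|^d$; hence $|\gamma_i(G)/\gamma_{i+1}(G)\otimes G/G'|\le|\gamma_i(G)/\gamma_{i+1}(G)|^d$. Taking the product over $i=2,\dots,c$ and using the telescoping of orders $\prod_{i=2}^c|\gamma_i(G)/\gamma_{i+1}(G)| = |\gamma_2(G)| = |G'| = p^k$ gives the bound $p^{kd}$.

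The main obstacle is the first inequality, for which I would exploit Proposition \ref{jk}. The idea is that the generators of $\mathrm{Im}\,\Psi_2$, namely the Jacobi elements $([x,y]\gamma_3(G)\otimes zG')([z,x]\gamma_3(G)\otimes yG')([y,z]\gamma_3(G)\otimes xG')$, lift to $\xi = ([x,y]\wedge z)([z,x]\wedge y)([y,z]\wedge x)\in\gamma_2(G)\wedge G$, and by the standard relations of the non-abelian exterior square (the same relations underlying Theorem \ref{lkk}, where the corresponding combination lies in $\ker g$) the element $\xi$ is carried to the identity by $\alpha_2$, hence $\xi\in\ker\alpha_2$; since the map $\gamma_2(G)\wedge G\to\gamma_2(G)/\gamma_3(G)\otimes G/G'$ sends $\xi$ to the given Jacobi element, $\mathrm{Im}\,\Psi_2$ lies in the image of $\ker\alpha_2$ under this map, so $|\mathrm{Im}\,\Psi_2|\le|\ker\alpha_2|$. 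An entirely analogous argument one step up the filtration, using Proposition \ref{jk}(ii), should give $|\mathrm{Im}\,\Psi_3|\le|\ker\alpha_3|$. As $\ker\alpha_2$ and $\ker\alpha_3$ are distinct factors of $\prod_{i=2}^c|\ker\alpha_i|$ and each factor is at least $1$, this yields $|\mathrm{Im}\,\Psi_2|\,|\mathrm{Im}\,\Psi_3|\le\prod_{i=2}^c|\ker\alpha_i|$. The delicate points, which is where the homomorphism assertions and degeneracy conditions of Proposition \ref{jk} must be used, are verifying that these lifts genuinely vanish one level down the filtration and that the resulting containments hold with the correct kernels so that the orders may be multiplied.
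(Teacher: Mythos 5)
Your handling of the middle equality, the telescoping identity, and the closing bound $p^{kd}$ matches the paper's route: the paper obtains the telescoping equality by citing the proof of \cite[Proposition 5]{el} and the bound $p^{kd}$ from the proof of \cite[Theorem 1.2]{rai1}. One caveat even here: you assert the identifications $\mathrm{coker}\,\alpha_i\cong\gamma_{i-1}(G)/\gamma_i(G)\otimes G/G'$ (for $i\ge 3$) and $\gamma_c(G)\wedge G\cong\gamma_c(G)\otimes G/G'$ as if they were formal; they are not. For a central subgroup $Z$ of a group $H$ one does not in general have $Z\wedge H\cong Z\otimes H^{ab}$ (take $Z=H$ abelian), and the isomorphisms you need use essentially that these central sections lie inside the derived subgroup --- this is exactly the nontrivial content of Ellis's proposition, which the paper cites rather than reproves.

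The genuine gap is in your argument for the first inequality. You claim that the lift $\xi=([x,y]\wedge z)([z,x]\wedge y)([y,z]\wedge x)\in\gamma_2(G)\wedge G$ of a Jacobi element satisfies $\alpha_2(\xi)=1_{G\wedge G}$, ``by the same relations underlying Theorem \ref{lkk}.'' But Theorem \ref{lkk} (Blackburn--Evens) is a statement about groups of class two, and the vanishing you need fails in general. Apply the commutator homomorphism $G\wedge G\to G'$, $u\wedge v\mapsto [u,v]$: it sends $\alpha_2(\xi)$ to $[[x,y],z]\,[[z,x],y]\,[[y,z],x]$, which by the Hall--Witt identity lies in $\gamma_4(G)$ but is not trivial in general; so for class $\ge 4$ one has $\alpha_2(\xi)\neq 1$ and the containment of $\mathrm{Im}\,\Psi_2$ in the image of $\ker\alpha_2$ collapses, while even in class $3$ the element $\alpha_2(\xi)$ merely lands in $\mathcal{M}(G)$ with no reason to vanish. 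What is actually true --- and is what Ellis--Wiegold and Rai prove --- is the weaker statement that $\alpha_2(\xi)$ lies in the image of $\gamma_3(G)\wedge G$ in $G\wedge G$, i.e.\ the Jacobi element dies one step down the filtration (in $\mathrm{coker}\,\alpha_3$-type quotients), not in $G\wedge G$ itself. Converting that weaker fact into $|\mathrm{Im}\Psi_2||\mathrm{Im}\Psi_3|\le\prod_{i=2}^c|\ker\alpha_i|$ is precisely the bookkeeping the paper performs: it invokes \cite[Proposition 2.1]{ri4} to get $|\mathcal{M}(G)||G'||\mathrm{Im}\Psi_2||\mathrm{Im}\Psi_3|\le|\mathcal{M}(G/G')|\prod_{i=2}^c|\gamma_i(G)/\gamma_{i+1}(G)\otimes G/G'Z(G)|$, and then compares with the telescoping equality through the epimorphisms $\delta_i:\gamma_i(G)/\gamma_{i+1}(G)\otimes G/G'\to\gamma_i(G)/\gamma_{i+1}(G)\otimes G/G'Z(G)$. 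Your plan also has a typing problem for $\Psi_3$: its target is $\gamma_3(G)/\gamma_4(G)\otimes (G/Z(G))^{ab}$, not $\otimes\, G/G'$, so the ``entirely analogous argument'' for $|\mathrm{Im}\Psi_3|\le|\ker\alpha_3|$ cannot even be formulated without first passing through these $\delta_i$. You flag exactly these points as ``delicate,'' but they are the heart of the theorem, and the mechanism you propose for them (vanishing in $G\wedge G$) is the one that fails.
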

\begin{proof}
Similar to the proof of \cite[Proposition 5]{el}, we have \[| G\wedge G|\prod_{i=2}^c | \ker \alpha_i|=|G/G'\wedge G/G' | \prod_{i=2}^c|\gamma_i(G)/\gamma_{i+1}(G)\otimes G/G'|.\] Since $| G\wedge G| =| \mathcal{M}(G)| |G'|$ and using the proof of \cite[Theorem 1.2]{rai1}, we have \begin{align*} &| G\wedge G|\prod_{i=2}^c | \ker \alpha_i|=| \mathcal{M}(G)| |G'|\prod_{i=2}^c | \ker \alpha_i|\\&=| \mathcal{M}(G/G')|\prod_{i=2}^c|\gamma_i(G)/\gamma_{i+1}(G)\otimes G/G'|\leq | \mathcal{M}(G/G')| p^{kd}.
\end{align*}
We claim that $| \mathrm{Im}\Psi_2| | \mathrm{Im} \Psi_3|\leq \prod_{i=2}^c | \ker \alpha_i|.$
Clearly, the map \[\delta_i: \gamma_i(G)/\gamma_{i+1}(G)\otimes G/G' \rightarrow \gamma_i(G)/\gamma_{i+1}(G)\otimes G/Z(G)G' \] given by $ x\gamma_{i+1}(G) \otimes yG'\mapsto  x\gamma_{i+1}(G) \otimes yG'Z(G)$ is a natural epimorphism for all $ i\geq 2. $ By \cite[Proposition 2.1]{ri4}, we have
\begin{align*}
| \mathcal{M}(G)| |G'|| \mathrm{Im}\Psi_2|| \mathrm{Im} \Psi_3|& \leq | \mathcal{M}(G/G')|\prod_{i=2}^c|\gamma_i(G)/\gamma_{i+1}(G)\otimes G/G'Z(G)|\\&=| \mathcal{M}(G/G')|\prod_{i=2}^c|\gamma_i(G)/\gamma_{i+1}(G)\otimes G/G'|/|\ker \delta_i |\\&=| \mathcal{M}(G)| |G'|\prod_{i=2}^c | \ker \alpha_i |/|\ker \delta_i |\end{align*} and so $ | \mathrm{Im}\Psi_2|| \mathrm{Im} \Psi_3|\leq  \prod_{i=2}^c | \ker \alpha_i |/|\ker \delta_i | \leq \prod_{i=2}^c | \ker \alpha_i |.$
 Therefore \begin{align*}| G\wedge G|| \mathrm{Im}\Psi_2|| \mathrm{Im} \Psi_3| &\leq | G\wedge G|\prod_{i=2}^c | \ker \alpha_i|\\&=| \mathcal{M}(G)| |G'|\prod_{i=2}^c | \ker \alpha_i|\\&=| \mathcal{M}(G/G')|\prod_{i=2}^c|\gamma_i(G)/\gamma_{i+1}(G)\otimes G/G' |\\&\leq | \mathcal{M}(G/G')| p^{kd}.\end{align*}
 The proof is completed.
\end{proof}
Let $1\rightarrow R\rightarrow F \xrightarrow{\pi}  G\rightarrow 1 $ be a free presentation for a group $ G $ and the exponent of a group $ X$ is denoted by  $ e(X). $ 

The next result is extracted from the work of Blackburn and Evens in
\cite[Remark, Section 3]{black burn}.
\begin{thm}\label{lkk}
Let $ G $ be a   non-abelian $p$-group of class two and $ e(G/G')=p^{s} $. Then
 $1\rightarrow \ker \eta \rightarrow G'\otimes G/G' \xrightarrow{\eta} \mathcal{M}(G)\rightarrow
\mathcal{M}(G/G')\rightarrow G' \rightarrow 1$
is exact, in which \[\eta:  G'\otimes G/G'\rightarrow  \mathcal{M}(G)=(R\cap F')/[R,F]\]\[x\otimes (zG')\mapsto [\tilde{x},\tilde{z}][R,F]\] such that
 $ \pi(\tilde{x}R)=x $ and $ \pi(\tilde{z}R)=z.$
Moreover,  \[\langle ([x,y]\otimes zG') 
([z,x]\otimes y G')([y,z]\otimes xG'),w^{p^s}\otimes w G'\mid x,y,z,w\in G\rangle \subseteq  \ker \eta.\]

\end{thm}
\begin{proof}
By \cite[Corollary 3.2.4]{kar}, we get \[1\rightarrow \ker \eta \rightarrow G'\otimes G/G' \xrightarrow{\eta} \mathcal{M}(G)\rightarrow
\mathcal{M}(G/G')\rightarrow G' \rightarrow 1\]
is exact, in which \[\eta:  G'\otimes G/G'\rightarrow \mathcal{M}(G)=(R\cap F')/[R,F]\] \[x\otimes zG' \mapsto [\tilde{x},\tilde{z}][R,F]\] such that
 $ \pi(\tilde{x}R)=x $ and $ \pi(\tilde{z}R)=z.$
Similar to the proof of \cite[Theorem 3.1]{black burn}, we have
$\langle ([x,y]\otimes zG')
([z,x]\otimes y G')([y,z]\otimes xG'),w^{p^s}\otimes w G'\mid x,y,z,w\in G\rangle \subseteq  \ker \eta,$ as required.
\end{proof}
\begin{lem}\label{ll}
Let $ G $ be a group of class two such that $ d(G/Z(G))=d $ is finite. Then $ d(G')\leq \frac{1}{2} d(d-1).$
\end{lem}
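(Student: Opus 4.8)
The plan is to exploit the bilinearity of the commutator map that is available in class-two groups. Since $G$ has nilpotency class two we have $G'\subseteq Z(G)$; in particular $G/Z(G)$ is abelian and $G'$ is abelian. First I would fix elements $x_1,\dots,x_d\in G$ whose images generate the $d$-generated abelian group $G/Z(G)$, these being available because $d(G/Z(G))=d$ is finite.

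Next I would record the standard commutator identities valid in a class-two group, namely $[xy,z]=[x,z][y,z]$ and $[x,yz]=[x,y][x,z]$ for all $x,y,z\in G$, which follow immediately from $G'\subseteq Z(G)$. These say that the map $(x,y)\mapsto[x,y]$ is bi-multiplicative, and since $[x,y]=1$ whenever $x$ or $y$ lies in $Z(G)$, it descends to an alternating bilinear map $G/Z(G)\times G/Z(G)\to G'$. I would also note the consequences $[x_j,x_i]=[x_i,x_j]^{-1}$ and $[x_i,x_i]=1$.

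Then I would argue that $G'$ is generated by the set $\{[x_i,x_j]:1\le i<j\le d\}$. Indeed $G'$ is generated by all commutators $[g,h]$; writing $g$ and $h$ as words in the $x_\ell$ modulo $Z(G)$ and expanding by the bilinearity just established, every $[g,h]$ becomes a product of terms $[x_i,x_j]^{\pm1}$, with the central contributions dropping out since they commute trivially. The antisymmetry then lets me restrict the index set to $i<j$. Counting this generating set yields at most $\binom{d}{2}=\tfrac12 d(d-1)$ elements, whence $d(G')\le\tfrac12 d(d-1)$, as claimed.

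I do not expect a genuine obstacle here; the only point requiring care is the reduction to generators, i.e.\ verifying that commutators involving central elements vanish so that only the $[x_i,x_j]$ survive the bilinear expansion. Everything else is the routine arithmetic of class-two commutators. One could alternatively phrase the entire argument as a surjection $(G/Z(G))\wedge(G/Z(G))\twoheadrightarrow G'$ induced by the commutator, observing that the exterior square of a $d$-generated abelian group is generated by at most $\binom{d}{2}$ elements; this is perhaps cleaner and fits the exterior-square language already in use in the paper.
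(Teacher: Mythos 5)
Your proof is correct and takes essentially the same approach as the paper: both arguments choose representatives $x_1,\dots,x_d$ of a generating set of $G/Z(G)$ and conclude that the commutators $[x_i,x_j]$ with $1\le i<j\le d$ generate $G'$, giving at most $\tfrac12 d(d-1)$ generators. The only difference is that the paper asserts the generation step as clear, while you spell it out via the bilinearity of the commutator map modulo the center, which is exactly the justification the paper leaves implicit.
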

\begin{proof}
We can choose a generating set $ \{ x_1 Z(G),\ldots,x_d Z(G)\} $ for $ G/Z(G) $ such that $ [x_i,x_j] $ is non-trivial for $ i\neq j. $
It is clear   to see that $ \{[x_i,x_j]\mid 1\leq i<j\leq d\} $ generates $G',$ as required.
\end{proof}

\begin{lem}\label{10}
Let $ G $ be a group.
\begin{itemize}
\item[$(i)$]If $G\cong \langle a, b\mid  a^{p^m}= b^{p^m}=[ a, b]^{p^{m}}=1,[a,b,a]=[a,b,b]=1,m\geq 2 \rangle$ for $ p\neq 2, $ then  $ \mathcal{M}(G)\cong  \mathbb{Z}_{p^{m}} \times \mathbb{Z}_{p^{m}}. $
\item[$(ii)$]If $G\cong \langle a, b\mid  a^{p^m}= b^{p^m}=[ a, b]^{p^{k}}=1,[a,b,a]=[a,b,b]=1,1\leq k< m  \rangle,$  then  $ \mathcal{M}(G)\cong \mathbb{Z}_{p^{m-k}}\times \mathbb{Z}_{p^{k}} \times \mathbb{Z}_{p^{k}}. $
\end{itemize}
\end{lem}
\begin{proof}
It is clearly obtained by \cite[Theorems 49 and 50]{morse}.
\end{proof}
\begin{thm} \cite[Theorem 1.1]{rai3}\label{16}
Let $ G $ be a non-abelian  group of order $ p^n $ of class two and $ |G'|=p^k.$ Then
$|\mathcal{M}(G)|=p^{\frac{1}{2}(n-k-1)(n+k-2)+1}$ if and only if $ G $ is isomorphic to one of the following  groups:
\item[$ (1) $] For  $ p\neq 2, $ $G_1\cong E_1\times \mathbb{Z}_{p}^{(n-3)}, $ where $ E_1 $ is the extra-special $p$-group of order $ p^3 $ and exponent $ p.$ 
\item[$ (2) $] For  $ p\neq 2, $ $G_2\cong \mathbb{Z}_{p}^{(4)}\rtimes\mathbb{Z}_{p}. $
\item[$ (3) $] For  $ p\neq 2, $
\[ G_3\cong \langle x_1,x_2,x_3\mid [x_1,x_2]^p=[x_2,x_3]^p=[x_3,x_1]^p=x_i^p=1,\]\[[x_1,x_2,x_i]=[x_3,x_1,x_i]=[x_2,x_3,x_i]=1, 1\leq i\leq 3\rangle. \]
\end{thm}

\section{main results}
As proven in   \cite[Theorem 1.1]{rai1}, the Schur multiplier of a non-abelian  group $G $ of order $ p^n $ with $ |G'|=p^k $  and $ d(G)=d $ is bounded by $ p^{\frac{1}{2}(d-1)(n+k-2)+1}.  $ Let $ p $ is an odd prime number. The main result of this paper is devoted  to characterizing the structure of all finite
$p$-groups   that attain the mentioned upper bound. Moreover, we  show that all   $p$-groups that attain the bound are capable.
 Throughout the paper, we say that $ |\mathcal{M}(G)| $ attains the bound provided that $ |\mathcal{M}(G)|= p^{\frac{1}{2}(d-1)(n+k-2)+1}. $

\begin{Main Theorem}
{\em  Let $ G $ be a non-abelian  group of order $ p^n $   with $ p\neq 2, $ $ |G'|=p^k,$ and $ d(G)=d. $ Then
$ |\mathcal{M}(G)|=p^{\frac{1}{2}(d-1)(n+k-2)+1}$  if and only if $ G $ is isomorphic to one of the following  groups:
\begin{itemize}
\item[$(i)$]    $H_1\cong E_1\times \mathbb{Z}_{p}^{(n-3)}, $ where $ E_1 $ is the extra-special $p$-group of order $ p^3 $ and exponent $ p.$ 
\item[$(ii)$]$H_2\cong \langle a, b\mid a^{p^m}= b^{p^m}=[ a, b]^p=1,[a,b,a]=[a,b,b]=1 \rangle,$ where  $m  > 1.$
\item[$(iii)$] $H_3\cong \langle a, b\mid a^{p^m}= b^{p^m}=[ a, b]^{p^{m}}=1,[a,b,a]=[a,b,b]=1 \rangle,$ where   $ m\geq 2.$
\item[$(iv)  $]$H_4\cong \langle a, b\mid a^{p^m}= b^{p^m}=[ a, b]^{p^{k}}=1,[a,b,a]=[a,b,b]=1,k\geq 2,m\geq 2 \rangle.$
 \item[$(v)  $] $H_5\cong\mathbb{Z}_{p}^{(4)} \rtimes\mathbb{Z}_{p}.$ 
\item[$(vi)  $]$ H_6\cong \langle x_1,x_2,x_3\mid [x_1,x_2]^p=[x_2,x_3]^p=[x_3,x_1]^p=x_i^p=1,$\[[x_1,x_2,x_i]=[x_3,x_1,x_i]=[x_2,x_3,x_i]=1, 1\leq i\leq 3\rangle.\]  
\item[$(vii)  $] $H_7\cong\langle x_1,y_1,x_2,y_2,x_3,y_3,z\mid [x_1,x_2]=y_3,[x_2,x_3]=y_1,[x_3,x_1]=y_2,$\[[y_i,x_i]=z,[x_i,y_j]=1,x_i^3=y_i^3=z^3=1, (i=1,2,3,~j=2,3)\rangle. \]
\end{itemize}}
\end{Main Theorem}
Let $cl(X)$ be used to denote 
 nilpotency class of a group $X$.
 We begin with the following lemma for the future convenience.
\begin{lem}\label{m1}
Let $ G $ be a non-abelian  group of order $ p^n $ such that $ |G'|=p^k, $  $ d(G)=d, $ and $G/G'\cong \mathbb{Z}_{p^{\alpha_1}}\oplus \ldots\oplus \mathbb{Z}_{p^{\alpha_d}}$, where $\alpha_1 \geq \ldots \geq \alpha_d.$ If
$|\mathcal{M}(G)|$ attains the bound, then
\begin{itemize}
\item[$(i)  $]$G/G'$ is  homocyclic.
\item[$(ii)  $]  $\text{Im}~ \Psi_2\cong \mathbb{Z}_p^{(d-2)},$   $\text{Im}~ \Psi_3=1,$ and $|\text{Im}~ \Psi_2| =|\ker \alpha_2|. $
\end{itemize}
\end{lem}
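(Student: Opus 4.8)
The plan is to feed the equality hypothesis into the inequality chain of Theorem~\ref{25}, collapse it to a single numerical inequality governed by three non-negative ``defects'', and then force each defect to vanish. Writing $G/G'\cong\mathbb{Z}_{p^{\alpha_1}}\oplus\cdots\oplus\mathbb{Z}_{p^{\alpha_d}}$ with $\alpha_1\ge\cdots\ge\alpha_d$ and $\sum_i\alpha_i=n-k$, one has $|\mathcal{M}(G/G')|=p^{M}$ with $M=\sum_{i<j}\min(\alpha_i,\alpha_j)$; a short convexity argument gives $M\le M_{\max}:=\tfrac12(d-1)(n-k)$, with equality exactly when $G/G'$ is homocyclic, so set $\delta:=M_{\max}-M\ge 0$. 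Theorem~\ref{25} supplies the equality $|\mathcal{M}(G)|\,|G'|\prod_i|\ker\alpha_i|=|\mathcal{M}(G/G')|\prod_i|\gamma_i(G)/\gamma_{i+1}(G)\otimes G/G'|$ together with $\prod_i|\ker\alpha_i|\ge|\mathrm{Im}\Psi_2|\,|\mathrm{Im}\Psi_3|$. Writing $I:=\log_p(|\mathrm{Im}\Psi_2|\,|\mathrm{Im}\Psi_3|)$ and $\sigma:=dk-\sum_i\log_p|\gamma_i(G)/\gamma_{i+1}(G)\otimes G/G'|\ge 0$, substituting $|\mathcal{M}(G)|=p^{\frac12(d-1)(n+k-2)+1}$ and using the identity $\tfrac12(d-1)(n+k-2)+1=M_{\max}+(d-1)k-(d-2)$, every large term cancels and one is left with
\begin{equation*}
\delta + I + \sigma \le d-2 .
\end{equation*}

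The decisive step is the lower bound $I\ge d-2$. Choosing generators so that $[x_1,x_2]\notin\gamma_3(G)$, the projection of the Jacobi element $\Psi_2(x_1\otimes x_2\otimes x_l)$ of Proposition~\ref{jk} onto the summand $(G'/\gamma_3(G))\otimes\langle x_lG'\rangle$ is $[x_1,x_2]\otimes x_lG'\ne 1$ for each $l=3,\dots,d$, and these $d-2$ elements are independent because they lie in distinct tensor slots. Since $\delta,\sigma\ge 0$, the displayed inequality then forces $\delta=\sigma=0$ and $I=d-2$. The equality $\delta=0$ is precisely assertion~(i): $G/G'$ is homocyclic.

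For (ii) I use that, when $k\ge 2$, the minimal value $I=d-2$ becomes impossible once $d\ge 4$. If $G'$ is cyclic then $\sigma=0$ forces $\exp(G/G')\ge\exp(G')\ge p^2$, so each of the $d-2$ elements above has order $\ge p^2$ and contributes at least $2$, giving $I\ge 2(d-2)>d-2$. If $d(G')\ge 2$, a second independent commutator produces further independent Jacobi elements, again pushing $I>d-2$ (and $\mathrm{Im}\Psi_3$ only helps when the class exceeds $2$). Either way $I=d-2$ is contradicted for $d\ge 4$; together with $d\ge 2$ from non-abelianness this yields $2\le d\le 3$.

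The main obstacle is the precise accounting of $I$. One must prove the clean bound $I\ge d-2$ in full generality — in particular the independence of the projected Jacobi elements and the correct treatment of $\mathrm{Im}\Psi_3$ when $\gamma_3(G)\ne 1$ — and then sharpen it to the strict inequality $I>d-2$ for $k\ge 2,\ d\ge 4$ through the rank/exponent case analysis of $G'$. Setting up the master inequality itself in arbitrary nilpotency class, i.e.\ keeping honest track of the tensor defect $\sigma$ and of the factors $|\ker\delta_i|$ buried in Theorem~\ref{25}, is the other delicate point.
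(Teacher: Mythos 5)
Your master inequality $\delta+I+\sigma\le d-2$ is a correct repackaging of Theorem \ref{25} together with Karpilovsky's formula $|\mathcal{M}(G/G')|=p^{\sum_{i<j}\min(\alpha_i,\alpha_j)}$, and the cancellation identity you invoke checks out. The proof collapses, however, exactly at the step you call decisive: the lower bound $I\ge d-2$ is false for the maps $\Psi_2,\Psi_3$ that Theorem \ref{25} actually controls. In Proposition \ref{jk} the arguments of $\Psi_2$ are taken modulo $G'Z(G)$ (and in the inequality of \cite{ri4} used to prove Theorem \ref{25} the relevant codomain is tensored with $G/G'Z(G)$ as well), so any generator $x_l$ lying in $G'Z(G)$ yields the trivial Jacobi element. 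Concretely, take $G=E_1\times\mathbb{Z}_p^{(n-3)}$ with $p$ odd and $n\ge 5$: this group attains the bound (it is $G_1$ of Theorem \ref{m3}), has $d=n-1\ge 4$ and $k=1$, yet $(G/Z(G))^{ab}\cong\mathbb{Z}_p^{(2)}$ has rank $2$, so every pure tensor in the domain of $\Psi_2$ expands into tensors with two linearly dependent entries, whence $\mathrm{Im}\,\Psi_2=\mathrm{Im}\,\Psi_3=1$ and $I=0<d-2$. Your ``distinct tensor slots'' argument silently replaces $\Psi_2$ by the Jacobi map defined on $(G/G')^{\otimes 3}$; that version does have large image, but it is not the one bounded by $\prod_i|\ker\alpha_i|$ in Theorem \ref{25} (only in class two does Theorem \ref{lkk} place its image inside $\ker g$). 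Since in your scheme part (i) rests entirely on $I\ge d-2$, part (i) is unproven. This is precisely why the paper does not argue this way: it gets (i) in one line from the refined bound of \cite[Corollary 1.3]{rai1}, which involves $\alpha_1$ and $\alpha_d$ and forces $\alpha_1=\alpha_d$ at equality, and it reserves the Jacobi-element counting for part (ii) alone, where $k\ge 2$ is available.

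Part (ii) has further gaps even if the framework is repaired. First, $[x_1,x_2]\notin\gamma_3(G)$ does not give $[x_1,x_2]\otimes x_lG'\ne 1$: the slot is $(G'/\gamma_3(G))\otimes\mathbb{Z}_{p^{\alpha_l}}\cong (G'/\gamma_3(G))/(G'/\gamma_3(G))^{p^{\alpha_l}}$, so you need $[x_1,x_2]\notin (G')^{p^{\alpha_l}}\gamma_3(G)$ (arrangeable by a Frattini-type choice of generators, but it must be said). More seriously, your cyclic case claims each slot contributes order at least $p^2$ because $\exp(G')\ge p^2$; but what enters is $G'/\gamma_3(G)$, and $\sigma=0$ bounds only the exponents of the layers $\gamma_i(G)/\gamma_{i+1}(G)$, so when the class is at least $3$ and $|G'/\gamma_3(G)|=p$ your elements have order exactly $p$ and you recover only $I\ge d-2$, no contradiction. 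Finally, the non-cyclic case (``a second independent commutator produces further independent Jacobi elements'') is an assertion of exactly what must be proved; making it precise is the substance of the paper's argument, which fixes $[x_1,x_2]$ and $[x_1,x_3]$ nontrivial modulo $\gamma_3(G)$ and plays the $2d-5$ elements $\Psi_2(x_1\otimes x_2\otimes x_j)$, $3\le j\le d$, and $\Psi_2(x_1\otimes x_3\otimes x_j)$, $4\le j\le d$, against the bound $p^{d-2}$.
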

\begin{proof}
\begin{itemize}
\item[$(i)  $]By using \cite[Corollary 1.3]{rai1}, we have
\begin{align*}
&|\mathcal{M}(G)|= p^{\frac{1}{2}(d-1)(n+k-2)+1}\leq  p^{\frac{1}{2}(d-1)(n+k-2-\alpha_1-\alpha_d)+1}.
\end{align*}
Thus $ \alpha_1=\alpha_d $ and so $G/G'$ is  homocyclic.

\item[$(ii)  $]Let $ cl(G)=c. $ By part $ (i), $  $G/G'$ is  homocyclic and so $ n=d\alpha_1+k. $
Theorem \ref{25} and \cite[Corollary 2.2.12]{kar} imply
 \begin{align*}
  p^{\frac{1}{2}(d-1)(n+k-2)+1}p^{k}\prod_{i=2}^c | \ker \alpha_i |&= p^{\frac{1}{2}(d-1)(d\alpha_1+2k-2)+1}p^{k}\prod_{i=2}^c | \ker \alpha_i |\\&=| \mathcal{M}(G)||G'|\prod_{i=2}^c | \ker \alpha_i |\\&= | \mathcal{M}(G/G')|\prod_{i=2}^c| \gamma_i(G)/\gamma_{i+1}(G)\otimes G/G'|\\&\leq p^{\frac{1}{2}d
 (d-1)\alpha_1}p^{kd}.
 \end{align*}
 Thus
 \begin{equation}\label{eq}
  \prod_{i=2}^c | \ker \alpha_i | \leq p^{d-2}.
 \end{equation}
 Consider
  $ G= \langle x_1,x_2,\ldots,x_d\rangle$ and $1\neq [x_1,x_2]\in G'\setminus \gamma_3(G)$.
   We claim that $\text{Im} \Psi_2\cong \mathbb{Z}_p^{(d-2)}.$ 
   Similar to the proof of  \cite[Theorem 2]{ele}, the set $A=\{\Psi_2(x_1 G'\otimes x_2 G'\otimes x_j G')\mid 3\leq j\leq d\} $ consists of $d-2$ linearly independent elements of order at least $p$ in the abelian $ p$-group $ G/G'\otimes G'/\gamma_3(G).$ Hence  $  p^{d-2}\leq |\langle A\rangle |\leq |\text{Im} \Psi_2|$.
   By using (\ref{eq}) and the proof of  Theorem \ref{25}, 
    \begin{align*}
     p^{d-2}&\leq |\langle A\rangle |\leq|\text{Im} \Psi_2|\\&\leq |\text{Im} \Psi_2||\text{Im} \Psi_3|\leq | \ker \alpha_2| \\&\leq \prod_{i=2}^c | \ker \alpha_i | \\&\leq p^{d-2}.
    \end{align*} 
     Hence
     $|\langle A\rangle |=|\text{Im} \Psi_2|=| \ker \alpha_2|=p^{d-2}$ and  $|\text{Im} \Psi_3|=1$ and so $\langle A\rangle =\text{Im} \Psi_2\cong \mathbb{Z}_p^{(d-2)}.$ 
\end{itemize}
\end{proof}

\begin{prop}\label{m4}
Let $ G $ be a non-abelian  group of order $ p^n $  with $ |G'|=p^k,$  $ d(G)=d $ and
$|\mathcal{M}(G)|=p^{\frac{1}{2}(d-1)(n+k-2)+1}.$ If $ k\geq 2$ and
$K$ is a central subgroup   of order $p$
contained in $ Z(G)\cap G',$  then $|\mathcal{M}(G/K)|$ also attains the bound, that is \[|\mathcal{M}(G/K)| =p^{\frac{1}{2}(d-1)(n + k - 4) + 1}.\]
\end{prop}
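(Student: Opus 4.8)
The plan is to relate $\mathcal{M}(G)$ to $\mathcal{M}(\bar G)$, where $\bar G=G/K$, through the central extension $1\to K\to G\to \bar G\to 1$, and then to let Rai's upper bound pin the order of $\mathcal{M}(\bar G)$ down to the claimed value. First I would record the invariants of $\bar G$. Since $K\subseteq G'\subseteq \Phi(G)$ and $|K|=p$, one has $|\bar G|=p^{n-1}$, the derived subgroup $\bar G'=G'/K$ has order $p^{k-1}$, and $\Phi(\bar G)=\Phi(G)/K$ forces $d(\bar G)=d(G)=d$. Because $k\geq 2$ we have $\bar G'\neq 1$, so $\bar G$ is non-abelian and the bound (\ref{kh}) applies to it, yielding
\[|\mathcal{M}(\bar G)|\leq p^{\frac12(d-1)((n-1)+(k-1)-2)+1}=p^{\frac12(d-1)(n+k-4)+1},\]
which is exactly the target order. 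Hence it remains only to prove the reverse inequality.

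For the lower bound I would invoke the standard exact sequence attached to the central subgroup $K$ (see \cite{kar}),
\[K\otimes G/G'\xrightarrow{f}\mathcal{M}(G)\xrightarrow{g}\mathcal{M}(\bar G)\xrightarrow{h}K\cap G'\to 1.\]
Here $K\cap G'=K$ has order $p$ since $K\subseteq G'$, and because $G/G'$ is an abelian $p$-group of rank $d$ (indeed homocyclic, by Lemma \ref{m1}$(i)$) while $K\cong\mathbb{Z}_p$, the left-hand term satisfies $|K\otimes G/G'|=p^{d}$. Reading off orders from the exactness ($h$ is surjective with $\ker h=\mathrm{Im}\,g$ and $\ker g=\mathrm{Im}\,f$) gives
\[|\mathcal{M}(\bar G)|=|K|\,|\mathrm{Im}\,g|=p\,\frac{|\mathcal{M}(G)|}{|\mathrm{Im}\,f|}\geq p\,\frac{|\mathcal{M}(G)|}{p^{d}}.\]

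Finally I would substitute the extremal value $|\mathcal{M}(G)|=p^{\frac12(d-1)(n+k-2)+1}$ and verify the arithmetic: the exponent of the lower bound is $\frac12(d-1)(n+k-2)+2-d$, which equals $\frac12(d-1)(n+k-4)+1$. Thus $|\mathcal{M}(\bar G)|\geq p^{\frac12(d-1)(n+k-4)+1}$, and combined with Rai's upper bound from the first step this forces equality, so that $|\mathcal{M}(G/K)|$ attains the bound. The step carrying the whole argument — and where the hypotheses are genuinely used — is the estimate $|K\otimes G/G'|\leq p^{d}$ on the Ganea term: it is exactly this bound that, together with the extremality of $|\mathcal{M}(G)|$, closes the gap against Rai's inequality and pins the order down (in fact it then shows the Ganea map $f$ must be injective). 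The only care needed beyond this is confirming the invariants $d(\bar G)=d$ and $|\bar G'|=p^{k-1}$ so that (\ref{kh}) is applied with the correct parameters.
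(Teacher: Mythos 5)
Your proposal is correct and takes essentially the same approach as the paper: the lower bound you extract from the Ganea exact sequence is exactly Jones' inequality \cite[Theorem 4.1]{17} that the paper invokes (with $\mathcal{M}(K)=1$ for cyclic $K$ and $|K\otimes G/G'|=p^{d}$ via Lemma \ref{m1}), and both arguments then sandwich $|\mathcal{M}(G/K)|$ between this lower bound and Rai's upper bound applied to $G/K$. Your explicit verification that $d(G/K)=d$ and $|(G/K)'|=p^{k-1}$ only fills in details the paper leaves implicit.
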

\begin{proof}
Let $K\subseteq Z(G)\cap G'$ and $ |K|=p. $
Lemma \ref{m1}$ (i) $ implies  $ G/G'\cong \mathbb{Z}^{(d)}_{p^{m}} $ for some $ m\geq 1, $ and so  $ G/G' \otimes K\cong \mathbb{Z}^{(d)}_{p}. $
Using \cite[Theorem 4.1]{17}, we have
\begin{align*}
 | \mathcal{M}(G)||G'\cap K| &\leq | \mathcal{M}(G/K) || \mathcal{M}(K)| |G/G' \otimes K|
\\&= | \mathcal{M}(G/K)||G/G' \otimes K|.
\end{align*}
 Thus
\begin{align*}
| \mathcal{M}(G)|&\leq  | \mathcal{M}(G/K) ||G/G' \otimes K|p^{-1}\\&\leq
p^{\frac{1}{2}(d- 1)(n + k - 4) + 1 + d - 1}\\
=&p^{\frac{1}{2}
(d - 1)(n + k - 2) + 1}=| \mathcal{M}(G)|.
\end{align*}
Therefore $|\mathcal{M}(G/K)|=p^{\frac{1}{2}(d- 1)(n + k - 4) + 1},$
as required.
\end{proof}
The following lemma shows that if the order of the Schur multiplier of  a non-abelian $p$-group  $ G $  attains the bound, then  $ G $ is capable.
\begin{lem}\label{ff}
Let $ G $ be a non-abelian   $p$-group of order $ p^n $  with $ |G'|=p^k$ and  $ d(G)=d. $ If
$|\mathcal{M}(G)|$ attains the bound, then $ G $ is capable.
\end{lem}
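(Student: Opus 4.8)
The plan is to establish capability through the exterior centre rather than through any explicit classification. By the theorem of Ellis quoted above, $G$ is capable exactly when $Z^{\wedge}(G)=Z^{*}(G)=1$, so I would argue by contradiction and assume $Z^{*}(G)\neq 1$. Since $Z^{*}(G)\leq Z(G)$ is a nontrivial subgroup of a finite $p$-group, it contains a central subgroup $K=\langle g\rangle$ of order $p$ with $K\subseteq Z^{*}(G)$. The engine of the whole argument is the criterion of Beyl et al. \cite{3} together with the Ganea term of the eight-term homology sequence of the central extension $1\to K\to G\to G/K\to 1$: for central $K$ one has $g\in Z^{\wedge}(G)$ if and only if the Ganea map $G/G'\otimes K\to\mathcal{M}(G)$ (whose image is $\langle g\wedge h\mid h\in G\rangle$) is trivial, equivalently the inflation $\mathcal{M}(G)\to\mathcal{M}(G/K)$ is a monomorphism. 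Feeding this into the exact sequence $1\to\mathcal{M}(G)\to\mathcal{M}(G/K)\to K\cap G'\to 1$ yields the pivotal numerical identity $|\mathcal{M}(G/K)|=|\mathcal{M}(G)|\,|K\cap G'|$, which I would play off against the bound (\ref{kh}) applied to $G/K$.

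Next I would split according to the position of $K$ relative to $G'$, using Lemma \ref{m1}$(i)$ to write $G/G'\cong\mathbb{Z}_{p^{m}}^{(d)}$ and thereby read off the parameters of $G/K$. Suppose first that $K\subseteq G'$, so $|K\cap G'|=p$ and $|\mathcal{M}(G/K)|=p\,|\mathcal{M}(G)|$. If $k\geq 2$ then $G/K$ is still non-abelian with $|G/K|=p^{n-1}$, $|(G/K)'|=p^{k-1}$ and $d(G/K)=d$ (as $K\subseteq G'\subseteq\Phi(G)$); the bound (\ref{kh}) for $G/K$ is $p^{\frac{1}{2}(d-1)(n+k-4)+1}$, whereas the identity gives $|\mathcal{M}(G/K)|=p^{\frac{1}{2}(d-1)(n+k-2)+2}$, a strictly larger power of $p$ (the exponents differ by $d>0$), contradicting (\ref{kh}); Proposition \ref{m4} furnishes the same contradiction directly. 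If instead $k=1$ then $K=G'$ and $G/K=G/G'\cong\mathbb{Z}_{p^{m}}^{(d)}$ is abelian, so I would use the exact value $|\mathcal{M}(G/G')|=p^{\frac{1}{2}d(d-1)m}$ from \cite[Corollary 2.2.12]{kar} together with $n=dm+1$; the identity then forces $p^{\frac{1}{2}d(d-1)m}=|\mathcal{M}(G/K)|=p\,|\mathcal{M}(G)|=p^{\frac{1}{2}d(d-1)m+2}$, again impossible.

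Finally suppose $K\cap G'=1$, so the identity collapses to $|\mathcal{M}(G/K)|=|\mathcal{M}(G)|$, while $G/K$ is non-abelian with $|G/K|=p^{n-1}$ and $|(G/K)'|=p^{k}$. Homocyclicity of $G/G'$ tells me that the image of $K$ lies in $\Phi(G/G')$ exactly when $m\geq 2$, so $d(G/K)=d$ if $m\geq 2$ and $d(G/K)=d-1$ if $m=1$. When $m\geq 2$ the bound (\ref{kh}) for $G/K$ equals $p^{\frac{1}{2}(d-1)(n+k-3)+1}$, which is strictly smaller than $|\mathcal{M}(G)|=|\mathcal{M}(G/K)|$ (exponents differ by $\tfrac{1}{2}(d-1)>0$); when $m=1$ and $d\geq 3$ it equals $p^{\frac{1}{2}(d-2)(n+k-3)+1}$, and a short computation shows the exponent of $|\mathcal{M}(G)|$ exceeds it by $\tfrac{1}{2}(n+k+d-4)>0$, using $n\geq 3$, $k\geq 1$, $d\geq 2$; when $m=1$ and $d=2$ the quotient $G/K$ would be cyclic yet satisfy $(G/K)'\cong G'\neq 1$, which is absurd. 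Thus every case is contradictory, whence $Z^{*}(G)=1$ and $G$ is capable. The step I expect to be most delicate is not the arithmetic but securing the pivotal identity: one must verify that membership of $K$ in the epicenter is precisely what annihilates the Ganea map, and one must track $d(G/K)$ correctly across the quotient $G/G'\to(G/K)^{(ab)}$, which may fail to be homocyclic — this is exactly where Lemma \ref{m1}$(i)$ is indispensable.
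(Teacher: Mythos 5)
Your proof is correct, and its engine is the same one the paper runs: choose a central subgroup $K\leq Z^{*}(G)$ of order $p$, use the fact that $K\leq Z^{\wedge}(G)$ annihilates the Ganea map $K\otimes G/G'\to\mathcal{M}(G)$ to obtain the identity $|\mathcal{M}(G/K)|=|\mathcal{M}(G)|\,|K\cap G'|$, and then contradict the bound (\ref{kh}) applied to $G/K$. Where you genuinely diverge is in handling the position of $K$. The paper first observes that $G/G'$ is homocyclic by Lemma \ref{m1}$(i)$, hence capable by \cite[Corollary 7.4]{3}, so that \cite[Corollary 2.2]{3} forces $Z^{*}(G)\subseteq G'$; this makes your case $K\cap G'=1$ vacuous and reduces the whole argument to one citation of Proposition \ref{m4}. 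You skip that structural reduction and instead dispose of $K\cap G'=1$ by tracking $d(G/K)$ through the homocyclic structure of $G^{ab}$ (correctly: $d(G/K)=d$ for $m\geq 2$ since $\bar{K}\subseteq\Phi(G^{ab})$, and $d(G/K)=d-1$ for $m=1$) and comparing exponents directly against (\ref{kh}); your arithmetic in each subcase checks out, including the degenerate $m=1$, $d=2$ situation. This costs more computation but buys two things: it avoids any appeal to capability of abelian groups, and it explicitly covers $k=1$ (where $K=G'=Z^{*}(G)$ and $G/K$ is abelian, handled via $|\mathcal{M}(G/G')|=p^{\frac{1}{2}d(d-1)m}$ and $n=dm+1$) --- a case the paper's single citation of Proposition \ref{m4} does not literally reach, since that proposition is stated only for $k\geq 2$. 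One terminological slip worth fixing: the map $\mathcal{M}(G)\to\mathcal{M}(G/K)$ in the Ganea sequence is the homomorphism induced by the natural epimorphism $G\to G/K$, not an inflation; inflation goes in the opposite direction.
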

\begin{proof}
Lemma \ref{m1}$(i)$ implies $ G/G'$ is homocyclic. Therefore $ G/G' $ is capable, by \cite[Corollary 7.4]{3}. Hence  $ Z^*(G)\subseteq G',$ by \cite[Corollary 2.2]{3}. Assume to the contrary that $ G $ is non-capable. Then there is a normal subgroup $ K $ of order $p$ in $ Z^*(G).$ By using  \cite[Theorem 4.2]{3} and  \cite[Theorem 2.5.6$(i)$]{kar},   we have $ |\mathcal{M}(G)|=|\mathcal{M}(G/K)|p^{-1}.$ Proposition \ref{m4} implies that $ |\mathcal{M}(G)|= p^{\frac{1}{2}(d- 1)(n + k - 4) },$  which is a contradiction, since by our assumption, we have $ |\mathcal{M}(G)|= p^{\frac{1}{2}(d- 1)(n + k - 2)+1}.$ Thus $  Z^*(G)=1 $ and so the result follows from \cite[Corollary 2.3]{3}.
\end{proof}
%
%
%
\begin{prop}\label{m2}
Let $ G $ be a non-abelian finite $p$-group of order $ p^n $ such that $ |G'|=p $ and $ d(G)=d. $ If  $ e(G/G')>p,$ then
$|\mathcal{M}(G)|$ attains the bound  if and only if
  $G\cong \langle a, b\mid a^{p^m}= b^{p^m}=[ a, b]^p=1,[a,b,a]=[a,b,b]=1 \rangle,$ where  $m  \geq 1.$
\end{prop}
\begin{proof}
Assume that $|\mathcal{M}(G)|$ attains the bound. By Lemmas \ref{lll} and \ref{ff}, we have 
  $ |G/Z(G)|=p^2$ and $ G= NZ(G),$ where  $ N $ is a minimal non-abelian $p$-group. Since $ G/G' $ is homocyclic, we get $ G/G'\cong \mathbb{Z}_{p^{\alpha_1}}^{(d)} $ and $ G/Z(G)\cong \mathbb{Z}_{p}\oplus \mathbb{Z}_{p}. $   Since $ G= NZ(G),$ we have $G'=N'=\langle c\rangle  $  such that $ N=\langle a,b\rangle $ and $ c=[a,b]. $ We claim that $ G=N. $
  
  Assume  that $ G\neq N.$  Choose an element $ y\in Z(G)\setminus G'.$
Thus $ c\otimes yG' $ is a non-trivial element in $G'\otimes G/G'. $  By using Theorem \ref{lkk}, we can see that $ c\otimes yG' $ is non-trivial in $ \ker \eta $ and so
 \begin{align*}| \mathcal{M}(G)|p^{2}&\leq | \mathcal{M}(G)||G'||\ker \eta| \\&=|G/G'\wedge G/G'|| G'\otimes G/G'|\leq p^{\frac{1}{2}
 d(d-1)\alpha_1}p^{d}.\end{align*} Thus $
 | \mathcal{M}(G)|\leq  p^{\frac{1}{2}
 d(d-1)\alpha_1+d-2}.$
  Now, since $ n=\alpha_1 d+1, $ by our assumption we have $ | \mathcal{M}(G)|=p^{\frac{1}{2}(d-1)(n+1-2)+1} =p^{\frac{1}{2}d(d-1)\alpha_1+1}.$ It is a contradiction. 
   Now  $ G= NZ(G)=N.$ So, $ G $ is a capable minimal non-abelian $ p$-group. By  Lemma \ref{9}, Proposition \ref{min}, and \cite[Corollary 8.2]{3}, we get  \[G\cong \langle a, b\mid a^{p^m}= b^{p^m}=[ a, b]^p=1,[a,b,a]=[a,b,b]=1 \rangle,\] where  $m  > 1.$  
The converse holds by Lemma \ref{10}.
\end{proof}

\begin{prop}\label{m5}
Let $ G $ be a non-abelian  group of order $ p^n $ of  class $t  $ such that $ |G'|=p^k,$  $ d(G)=d $ and $|\mathcal{M}(G)|=p^{\frac{1}{2}(d-1)(n+k-2)+1}$ for all $ k $ such that $ k\geq 2.$ If
$K$ is a non-trivial central subgroup of order $p^m$ contained in $ Z(G)\cap G'\neq G',$
  then $|\mathcal{M}(G/K)|$ also attains the bound,  that is \[|\mathcal{M}(G/K)| =p^{\frac{1}{2}(d-1)(n + k - 2(m+1)) + 1}.\]
\end{prop}
\begin{proof}
 Let  $K$ be a non-trivial central subgroup of order $p^m$ contained in $ Z(G)\cap G'.$  We have $| G/K|=p^{n-m} $ and $ |(G/K)'|=p^{k-m}.$ We prove the result by using induction on $m.$ If $m=1,$ then the result holds by  Proposition \ref{m4}. Now let $ m\geq 2.$ Consider a normal subgroup $ K_1 $ in $ K $ of order $ p^{m-1}$ and using the   induction hypothesis, we have \[|\mathcal{M}(G/K_1)| =p^{\frac{1}{2}(d - 1)(n + k - 2m) + 1}.\]
Since $ K/K_1\subseteq Z(G/K_1)\cap  (G'/K_1)$ and $ |K/K_1| =p,$
  Proposition \ref{m4} implies that \[|\mathcal{M}(G/K)|=| \mathcal{M}(\dfrac{G/K_1}{K/K_1})| =p^{\frac{1}{2}(d - 1)(n + k - 2(m+1)) + 1}.\]
This completes the proof.
\end{proof}
The proof of the following corollary is similar to that of \cite[Theorem 1.2]{ri4}.
\begin{cor}\label{kh2}
Let $ G $ be a non-abelian  group of order $ p^n $ of  class $t\geq 3  $ such that $ |G'|=p^k,$  $ d(G)=d $ and $|\mathcal{M}(G)|=p^{\frac{1}{2}(d-1)(n+k-2)+1}$ for $ k\geq 2. $ Then $|\mathcal{M}( G/\gamma_i(G))| $ also attains  the bound  for all $ i $ such that $3 \leq i\leq t.$ 
\end{cor}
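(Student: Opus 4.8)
The plan is to prove Corollary \ref{kh2} by iterating Proposition \ref{m5} along the lower central series, reducing at each stage the size of the quotient by the appropriate central factor. The key observation is that for $3 \le i \le t$, the subgroup $\gamma_i(G)$ is a central subgroup of $G$ contained in $Z(G) \cap G'$ precisely when $G$ has nilpotency class such that $\gamma_i(G)$ is central; since $\gamma_{t+1}(G)=1$, the top factor $\gamma_t(G)$ is central, and I would work downward from $i=t$, or alternatively set up each quotient $G/\gamma_i(G)$ so that Proposition \ref{m5} applies directly to the central subgroup $\gamma_i(G)$ of $G$.

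First I would record the relevant numerics. Suppose $|\gamma_i(G)| = p^{m_i}$, so that $|G/\gamma_i(G)| = p^{n - m_i}$ and $|(G/\gamma_i(G))'| = |G'/\gamma_i(G)| = p^{k - m_i}$. The crucial point is that $\gamma_i(G) \subseteq G' \cap Z(G)$ is only automatic for the topmost central factor, so for a general $i$ I would instead observe that $\gamma_i(G)$ is normal and contained in $G'$, and apply Proposition \ref{m5} with $K = \gamma_i(G)$ after checking it lies in $Z(G)\cap G'$. For this to be legitimate one needs $\gamma_{i}(G)$ central, i.e. $\gamma_{i+1}(G)=[\gamma_i(G),G]=1$, which holds only for $i=t$. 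To handle all $i$ simultaneously, the cleanest route is the inductive structure already encoded in Proposition \ref{m5}: I would verify that $G/\gamma_{t}(G)$ attains the bound using $K=\gamma_t(G)$, then pass to $G/\gamma_{t-1}(G)$ by noting $\gamma_{t-1}(G)/\gamma_t(G)$ is central in $G/\gamma_t(G)$ and lies in its derived subgroup, and repeat.

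Concretely, I would set up a downward induction on $i$ from $t$ to $3$. For the base case $i=t$: since $\gamma_{t+1}(G)=1$, the subgroup $\gamma_t(G)$ is central, lies in $G'$, and by Proposition \ref{m5} with $K=\gamma_t(G)$ of order $p^{m_t}$, the quotient $\mathcal{M}(G/\gamma_t(G))$ attains the bound with the claimed exponent $\tfrac{1}{2}(d-1)(n + k - 2(m_t+1))+1$. For the inductive step, assuming $\mathcal{M}(G/\gamma_{i+1}(G))$ attains the bound, I would apply Proposition \ref{m5} to the group $\bar{G} = G/\gamma_{i+1}(G)$ with the central subgroup $\gamma_i(G)/\gamma_{i+1}(G)$, which is central in $\bar G$ and contained in $\bar G' = G'/\gamma_{i+1}(G)$; the resulting quotient is $\bar{G}\big/\big(\gamma_i(G)/\gamma_{i+1}(G)\big) \cong G/\gamma_i(G)$, so $\mathcal{M}(G/\gamma_i(G))$ again attains the bound. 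Throughout, $d(G/\gamma_i(G)) = d(G) = d$ because $\gamma_i(G) \subseteq \Phi(G)$ for $i\ge 2$, so the generator number is preserved and the bound formula applies with the same $d$.

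The main obstacle I anticipate is bookkeeping the exponents: one must confirm that at each descent the parameters $(n,k)$ drop to $(n-m_i, k-m_i)$ in lockstep and that substituting these into $\tfrac12(d-1)(n'+k'-2)+1$ reproduces exactly the value supplied by Proposition \ref{m5}, so that ``attains the bound'' is genuinely preserved rather than merely an inequality. The subtler technical point is verifying that $d$ really is invariant under these quotients and that each successive central factor still lands inside the derived subgroup of the current quotient; both follow from the lower central series being contained in the Frattini subgroup, but I would state this explicitly to license repeated application of Proposition \ref{m5}. Once the invariance of $d$ and the exact exponent arithmetic are in hand, the induction closes and the corollary follows for all $3 \le i \le t$.
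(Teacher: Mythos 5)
Your proof is correct and follows essentially the same route as the paper: a downward induction starting from $i=t$ (where $\gamma_t(G)$ is central and Proposition \ref{m5} applies directly), with the inductive step applying Proposition \ref{m5} to $\bar{G}=G/\gamma_{i+1}(G)$ and the central subgroup $\gamma_i(G)/\gamma_{i+1}(G)\subseteq Z(\bar{G})\cap \bar{G}'$, so that the quotient $\bar{G}/(\gamma_i(G)/\gamma_{i+1}(G))\cong G/\gamma_i(G)$ attains the bound. Your explicit remarks that $d$ is preserved because $\gamma_i(G)\subseteq\Phi(G)$ and that the exponent arithmetic matches are correct refinements of points the paper leaves implicit.
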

\begin{proof}
We prove the result by using induction on $ j=t-i+3 $ for all $ i $ such that $3 \leq i\leq t. $ If $ j=3, $ then since $ k\geq 2, $ by using  Proposition \ref{m5}, $|\mathcal{M}( G/\gamma_t(G))| $ attains  the bound. Using the   induction hypothesis,  $|\mathcal{M}( G/\gamma_i(G))| $ attains  the bound. 
\\
Since $ \gamma_{i-1}(G)/\gamma_i(G)\subseteq Z(G/\gamma_i(G))\cap G'/\gamma_i(G),$  Proposition \ref{m5} implies that  \[|\mathcal{M}( G/ \gamma_{i-1}(G))|=|\mathcal{M}(\dfrac{G/\gamma_i(G)}{\gamma_{i-1}(G)/\gamma_i(G)}  )| \] attains  the bound, as required.
\end{proof}

\begin{prop}\label{fff}
Let $ G $ be a non-abelian  group of order $ p^n $  such that  $ G/G'\cong \mathbb{Z}_{p^{m}}^{(2)} $ and   $ G' \cong \mathbb{Z}_{p^{k}}$ with $ k\geq 2. $ Then
$|\mathcal{M}(G)|$ attains the bound if and only if
  \[G\cong \langle a, b\mid a^{p^m}= b^{p^m}=[ a, b]^{p^{m}}=1,[a,b,a]=[a,b,b]=1 \rangle,\]
where  $m\geq 2$ and $ p\neq 2 $ or
\[G\cong \langle a, b\mid a^{p^m}= b^{p^m}=[ a, b]^{p^{k}}=1,[a,b,a]=[a,b,b]=1,k\geq 2,m\geq 2 \rangle.\]
\end{prop}
\begin{proof}
Let $|\mathcal{M}(G)|$ attain the bound. Now by \cite[Theorem 1]{morse}, we get 
\begin{align*}
G&\cong G(\alpha,\beta,\gamma;\rho,\sigma)\\&=\langle a, b\mid [a,b]^{p^{\gamma}}=[a,b,a]=[a,b,b]=1,a^{p^{\alpha}}=[a,b]^{p^{\rho}},b^{p^{\beta}}=[a,b]^{p^{\sigma}}
\rangle,
\end{align*}
where $ \alpha+\beta+\gamma=n, $ $ 1\leq \gamma\leq \beta\leq \alpha, $ and $ 0\leq \rho,\sigma\leq \gamma.$ Since $ G/G'\cong \mathbb{Z}_{p^{m}}^{(2)}, $ we have $ \alpha=\beta. $ By Lemma \ref{ff} and \cite[Theorems 63 and 67$(i)$]{morse}, $ \rho=\sigma=\gamma.$ Using \cite[Theorem 1]{morse}, we conclude that 
 \[G\cong \langle a, b\mid a^{p^m}= b^{p^m}=[ a, b]^{p^{m}}=1,[a,b,a]=[a,b,b]=1 \rangle,\]
where  $m\geq 2$ and $ p\neq 2 $ or
\[G\cong \langle a, b\mid a^{p^m}= b^{p^m}=[ a, b]^{p^{k}}=1,[a,b,a]=[a,b,b]=1,k\geq 2,m\geq 2 \rangle.\] The converse holds by Lemma \ref{10}.
 \end{proof}
Now we are ready to obtain the structure of a $p$-group $ G $ of class two such that $ |\mathcal{M}(G)| $ attains the bound. 
\begin{lem}\label{120}
Let $ G $ be a non-abelian  group of order $ p^n $  of class two such that $ |G'|\geq p^2$ and $ d(G)=d. $ If $|\mathcal{M}(G)|$ attains the bound, then $2\leq d\leq 3.$
\end{lem}
\begin{proof}
If $ e(G/G')=p, $ then $ d=n-k. $ By using Theorem \ref{16},  $2\leq d\leq 3.$ Let $ e(G/G')>p. $ If $ k=1,  $ then Proposition \ref{m2} implies $ d=2. $ Now, assume that $ k\geq 2 $ and $ K\subsetneqq G' $ such that
$ |G'/K|=p. $ By Proposition \ref{m5}, $|\mathcal{M}(G/K)|$ attains the bound so $ d=d(G/K)=2, $ by Proposition \ref{m2}. Hence,  $2\leq d\leq 3. $
\end{proof}
\begin{lem}\label{m21}
There exists no   non-abelian  group $ G $  of order $ p^n $ of class two such that $ d(G)=3, $ $e( G')\geq p^{2},$ $ e(G/G')\geq p^{2},$ and  $ |\mathcal{M}(G)|=p^{\frac{1}{2}(d-1)(n+k-2)+1}.$ 
\end{lem}
\begin{proof}
Assume to the country that there is a such group $ G. $ Clearly,  $ e(G/Z(G))=e(G')= p^{k}.$ Using Lemma \ref{ll},  $d( G')\leq 3.$ Now let $G'\cong \mathbb{Z}_{p^{k}}.$
Consider the factor group $ G/G'^{p}. $
Obviously, $ (G/G'^{p})'\cong  \mathbb{Z}_{p} $ and $ d(G/G'^{p})=3. $ Using Proposition \ref{m5},
$|\mathcal{M}(G/G'^{p})|$ also attains the bound, so Proposition \ref{m2} implies 
$ d(G/G'^{p})=2. $ It is a contradiction. By a similar way, if
 $2\leq d( G')\leq 3, $ then we get a contradiction.
\end{proof}

\begin{thm}\label{m3}
 Let $ G $ be a non-abelian group of order $ p^n $  of class two such that $ |G'|=p^k$ and $ d(G)=d. $ Then
$|\mathcal{M}(G)|$ attains the bound  if and only if $ G $ is isomorphic to one of the following  groups:
\begin{itemize}
\item[$(i)  $]  For $ p\neq 2, $  $H_1\cong E_1\times \mathbb{Z}_{p}^{(n-3)}, $ where $ E_1 $ is the extra-special $p$-group of order $ p^3 $ and exponent $ p.$ 
\item[$(ii)  $]$H_2\cong \langle a, b\mid a^{p^m}= b^{p^m}=[ a, b]^p=1,[a,b,a]=[a,b,b]=1 \rangle,$ where  $m  > 1.$
\item[$(iii)  $]For $p\neq 2, $ $H_3\cong \langle a, b\mid a^{p^m}= b^{p^m}=[ a, b]^{p^{m}}=1,[a,b,a]=[a,b,b]=1 \rangle,$ where    $ m\geq 2.$
\item[$(iv)  $]$H_4\cong \langle a, b\mid a^{p^m}= b^{p^m}=[ a, b]^{p^{k}}=1,[a,b,a]=[a,b,b]=1, k\geq 2,m\geq 2 \rangle.$
 \item[$(v)  $]For $p\neq 2, $ $H_5\cong\mathbb{Z}_{p}^{(4)} \rtimes\mathbb{Z}_{p}.$ 
\item[$(vi)  $]For $p\neq 2, $ $ H_6\cong \langle x_1,x_2,x_3\mid [x_1,x_2]^p=[x_2,x_3]^p=[x_3,x_1]^p=x_i^p=1,$\[[x_1,x_2,x_i]=[x_3,x_1,x_i]=[x_2,x_3,x_i]=1, 1\leq i\leq 3\rangle.\] 
\end{itemize}
\end{thm}
\begin{proof}

Suppose that $|\mathcal{M}(G)|$ attains the bound.
First assume that $ |G'|=p.$ By Lemma \ref{m1}$(i),$ $ G/G' $ is homocyclic.  Theorem \ref{16}$(i)$ and Proposition \ref{m2} imply $ G\cong E_1\times \mathbb{Z}_{p}^{(n-3)}\cong H_1 $ or \[G\cong H_2\cong \langle a, b\mid a^{p^m}= b^{p^m}=[ a, b]^p=1,[a,b,a]=[a,b,b]=1 \rangle,\] where  $m  > 1.$
\\ Now suppose that $ |G'|\geq p^2.$  By using Lemma \ref{120},  $2\leq d\leq 3.$ Let $ d=2. $
 By Proposition \ref{fff}, we have \[G\cong \langle a, b\mid a^{p^m}= b^{p^m}=[ a, b]^{p^{m}}=1,[a,b,a]=[a,b,b]=1 \rangle,\]
where  $m\geq 2$ and $ p\neq 2 $ or
\[G\cong \langle a, b\mid a^{p^m}= b^{p^m}=[ a, b]^{p^{k}}=1,[a,b,a]=[a,b,b]=1,2\leq k\leq m \rangle.\] Thus $ G\cong H_3 $ or $ G\cong H_4. $
   \newline
Let now $ d=3. $ By Lemma \ref{m21},  $G/G'$ is of exponent $ p$ and hence $ G'=\phi(G).$ Thus $d(G)=n-k=3.$ Therefore \[|\mathcal{M}(G)|=p^{\frac{1}{2}(n-k-1)(n+k-2)+1}.\] Using Theorem \ref{16},  $G\cong H_5$ or $G\cong H_6.$ The converse  follows from Propositions \ref{m2}, \ref{fff} and Theorem \ref{16}. The proof is completed.
\end{proof}

\begin{prop}\label{two}
There exists no   non-abelian  $ 2$-generator group $ G $  of order $ p^n $ of class $ t\geq 3 $ such that $| G'|\geq p^2$ and  $ |\mathcal{M}(G)|=p^{\frac{1}{2}(d-1)(n+k-2)+1}$  with  $p\neq 2.$  
\end{prop}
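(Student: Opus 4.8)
The plan is to assume for contradiction that such a $G$ exists (so $d=d(G)=2$, class $t\geq 3$, $|G'|=p^k$ with $k\geq 2$, and $|\mathcal{M}(G)|=p^{\frac12(d-1)(n+k-2)+1}$) and to play the triviality forced on the map $\Psi_3$ of Proposition \ref{jk} against the non-triviality of $\gamma_3(G)/\gamma_4(G)$. First I would record the consequences of attaining the bound when $d=2$. By Lemma \ref{m1}$(i)$ the quotient $G/G'\cong\mathbb{Z}_{p^{\alpha}}^{(2)}$ is homocyclic, so $n=2\alpha+k$, $|\mathcal{M}(G)|=p^{\alpha+k}$ and $|\mathcal{M}(G/G')|=p^{\alpha}$. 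Feeding $|G\wedge G|=|\mathcal{M}(G)||G'|$ together with these values into the chain of Theorem \ref{25} and comparing exponents collapses the whole chain to equalities and yields $|\mathrm{Im}\,\Psi_2|\,|\mathrm{Im}\,\Psi_3|\leq p^{d-2}=1$; in particular $\mathrm{Im}\,\Psi_3=1$. On the other hand, since the class is $t\geq 3$ the lower central series strictly decreases down to $1$, so $\gamma_3(G)/\gamma_4(G)\neq 1$. Writing $G=\langle x_1,x_2\rangle$ and $a=[x_1,x_2]$, this group is generated by $[a,x_1]\gamma_4(G)$ and $[a,x_2]\gamma_4(G)$, and by a Frattini argument at least one of them is nonzero modulo $(\gamma_3(G)/\gamma_4(G))^{p}$.

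Next I would pin down $(G/Z(G))^{(ab)}$. By Corollary \ref{kh2} the quotient $\bar G=G/\gamma_3(G)$ has class two, is $2$-generated, and still attains the bound; Theorem \ref{m3} then forces $\bar G$ to be one of the $2$-generated groups on its list, namely $E_1$ (the case $n=3$ of $G_1$), $G_2$ or $G_4$, since $G_3$ and $G_5$ need more than two generators. Using Lemma \ref{jj} (together with $Z(E_1)=E_1'$) I read off $Z(\bar G)$ in each case and take preimages in $G$: this identifies $(G/Z(G))^{(ab)}$ as $\mathbb{Z}_{p^{\alpha}}^{(2)}$ when $\bar G\cong G_4$ and as $\mathbb{Z}_{p}^{(2)}$ when $\bar G\cong G_2$ or $E_1$, and in every case the images $\bar x_1,\bar x_2$ of the generators form a basis, i.e. $(G/Z(G))^{(ab)}=\langle\bar x_1\rangle\oplus\langle\bar x_2\rangle$.

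With this in hand the contradiction for odd $p$ is a short computation. Evaluating the formula of Proposition \ref{jk}$(ii)$ at $x=z=x_1$, $y=w=x_2$ gives, in additive notation, $\Psi_3=2\bigl([a,x_1]\gamma_4(G)\otimes\bar x_2\bigr)-2\bigl([a,x_2]\gamma_4(G)\otimes\bar x_1\bigr)$, whose $\bar x_1$- and $\bar x_2$-components lie in the independent summands $\gamma_3(G)/\gamma_4(G)\otimes\langle\bar x_i\rangle$. Reducing the second tensor factor modulo $p$ and using that $2$ is invertible, the component attached to whichever of $[a,x_1]$, $[a,x_2]$ survives in $(\gamma_3(G)/\gamma_4(G))\otimes\mathbb{F}_p$ is nonzero; hence $\mathrm{Im}\,\Psi_3\neq 1$, contradicting the previous paragraph. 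Since $E_1$ and $G_4$ occur only for $p\neq 2$, this disposes of every case except $\bar G\cong G_2$ with $p=2$.

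The prime $2$ is the main obstacle. There the factor $2$ annihilates the element above, so $\Psi_3$ can no longer detect $\gamma_3(G)$ and a different mechanism is required. For this remaining case I would fall back on the exterior-square technique of Propositions \ref{min} and \ref{fff}: from $\bar G\cong G_2$ one reads off $x_1^{2^{\alpha}},x_2^{2^{\alpha}}\in\gamma_3(G)$ and $a^{2}\in\gamma_3(G)$ with $a\notin\gamma_3(G)$, and I would manipulate $g\wedge h$ in $G\wedge G$ to exhibit a nontrivial element of $Z^{\wedge}(G)$, contradicting the capability of $G$ guaranteed by Lemma \ref{ff}. Carrying out this last wedge computation cleanly, and verifying that it does not collapse in characteristic $2$ the way $\Psi_3$ does, is the delicate point of the whole argument.
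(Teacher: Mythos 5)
Your argument for odd $p$ is essentially correct, and it is leaner than the paper's: combining Theorem \ref{25} with the attained bound when $d=2$ does force $|\mathrm{Im}\,\Psi_2||\mathrm{Im}\,\Psi_3|\leq 1$, and evaluating $\Psi_3$ at $x_1\otimes x_2\otimes x_1\otimes x_2$ gives $([a,x_1]\gamma_4(G)\otimes \bar x_2)^2\,([a,x_2]\gamma_4(G)\otimes \bar x_1)^{-2}$; after tensoring the second factor with $\mathbb{F}_p$ (where the images of $x_1,x_2$ are automatically a basis, since $(G/Z(G))^{(ab)}$ is $2$-generated and cannot be cyclic), your Frattini observation on $\gamma_3(G)/\gamma_4(G)$ makes this element nontrivial whenever $2$ is invertible. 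Note that this needs no induction on the class and no identification of $G/\gamma_3(G)$, so for odd $p$ it handles every $t\geq 3$ at once, whereas the paper inducts on $t$ with base case $t=3$. One correction: your structural step is wrong as stated, because $Z(G)$ is \emph{not} the preimage of $Z(G/\gamma_3(G))$ --- that preimage contains $G'$ (as $G/\gamma_3(G)$ has class two), hence is strictly larger than $Z(G)$ whenever the class is at least $3$. What you actually have is a surjection of $(G/Z(G))^{(ab)}$ onto $(\bar G/Z(\bar G))^{(ab)}$, which is all a nontriviality argument needs; better still, the mod-$p$ reduction above makes the identification unnecessary.

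The genuine gap is the case $p=2$, where necessarily $G/\gamma_3(G)\cong G_2$: you only sketch a hope there, and the sketch is not obviously implementable. The wedge computations of Propositions \ref{min} and \ref{fff} rely on explicit defining relations of particular two-generator class-two groups; here $G$ has class $\geq 3$, you have no presentation, and knowing only that $x_1^{2^{\alpha}},x_2^{2^{\alpha}},a^2$ lie in $\gamma_3(G)$ does not by itself produce an element $g\neq 1$ with $g\wedge h=1_{G\wedge G}$ for all $h\in G$. The paper never meets this difficulty because it chooses a different quotient: it works with $G/Z(G)$ rather than $G/\gamma_3(G)$. It first proves $Z(G)\subseteq G'$ by a counting argument, deduces from Proposition \ref{m5} that $|\mathcal{M}(G/Z(G))|$ attains the bound, and then --- this is exactly the constraint your route loses --- uses Theorem \ref{lkk} to show $Z(G/Z(G))=(G/Z(G))'$. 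Since Lemma \ref{jj} gives $Z(G_2)=\langle a^p\rangle\times\langle b^p\rangle\times G_2'\supsetneq G_2'$, the classification of Theorem \ref{m3} (with Lemma \ref{ll}) then leaves only $G/Z(G)\cong G_4$, which exists only for $p\neq 2$, and the $\Psi_3$ computation finishes from there. So to close your gap you must either prove the analogous ``center equals derived subgroup'' statement for a suitable quotient, as the paper does, or actually exhibit an exterior-center element in the $p=2$, $\bar G\cong G_2$ configuration; as it stands, your proposal proves the proposition only for odd $p$.
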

\begin{proof}
Assume to the contrary that there is a such group $ G. $   Let $ G= \langle x,y\rangle,$ where   $ x,y\in G\setminus \phi(G).$ Without loss of generality, we may assume that  $ [x,y,x]\neq 1. $
So,    $\Psi_3(x G'\otimes y G'\otimes x G' \otimes  y G')=([[x,y],x]\otimes yG')^2([y,[x,y]]\otimes xG')^2 \neq 1.$ On the other hand, by Lemma \ref{m1}$(ii), $  $ |\mathrm{Im} \Psi_3|= 1. $ Hence we have a contradiction.
\end{proof}

We are ready to obtain the structures of $G$ when $ |\mathcal{M}(G)|=p^{\frac{1}{2}(d-1)(n+k-2)+1}.$
\begin{p b}
{\em Suppose that $G$ is  nilpotent of class two and $|\mathcal{M}(G)|$ attains the bound. Then $ G $ is isomorphic to one of the groups $  H_1, H_2,H_3, H_4,H_5$ or $ H_6, $ by  Theorem \ref{m3}. Now, let $G$ be nilpotent of class at least $3.$  Using Corollary \ref{kh2},
$|\mathcal{M}(G/\gamma_3(G))| $ attains the bound.  Theorem \ref{m3} implies that 
$ G/\gamma_3(G)\cong H_i $ for some $ i $ in $ 1\leq i\leq 6 $ and so 
$ 2\leq d(G/\gamma_3(G)) \leq 3.$  Since $ d(G)=d(G/\gamma_3(G)), $ we have $ 2\leq d(G)\leq 3. $ Thus by Proposition \ref{two}, $ d(G) = 3 $ and so $ d(G/\gamma_3(G))=3.$
By Theorem \ref{m3}, $ G/\gamma_3(G)\cong  H_5$ or $G/\gamma_3(G)\cong H_6.$ Hence $( G/\gamma_3(G))^{ab} $ is elementary abelian. Thus $ d(G)=n-k.$ By using  \cite[Theorem 1.2]{ri4}$(iv), $   $ G\cong H_7. $ The converse holds by  Theorem \ref{m3} and \cite[Theorem 1.2]{ri4}.}
\end{p b}

\end{document}